\documentclass[leqno,12pt]{amsart}

\usepackage{amsmath,amsfonts,amsthm,mathrsfs,amssymb}
\usepackage[all]{xy}
\usepackage{enumerate}
\usepackage{graphicx,epsfig}
\usepackage{hyperref}
\usepackage{mathtools}
\usepackage{bm}
\usepackage{tikz}
\usepackage{pgf}
\usepackage{epsfig}

\usetikzlibrary{matrix}

\setlength{\textwidth}{16cm} 
\setlength{\topmargin}{1cm}
\setlength{\oddsidemargin}{0cm} 
\setlength{\evensidemargin}{0cm}
\setlength{\textheight}{20.5cm}



\theoremstyle{plain}
\newtheorem{thm}{Theorem}[section]
\newtheorem{cor}[thm]{Corollary}
\newtheorem{lem}[thm]{Lemma}

\newtheorem{prop}[thm]{Proposition}

\theoremstyle{definition}
\newtheorem{defn}[thm]{Definition}

\newtheorem{rem}[thm]{Remark}



\newcommand\NN{\mathbb N}

\newcommand\bq{{\underline{q}}}
\newcommand\bb{{\underline{\bullet}}}

\newcommand\Cc{\mathcal C}

\def\ZZ{{\mathbb Z}}

\def\C.{C_\bullet}
\def\F.{F_\bullet}
\def\k.{\mathcal{K}_{\bullet}}

\newcommand\ra{\rightarrow}

\newcommand\intF{\textnormal{I}}
\newcommand\compF{\textnormal{U}}
\newcommand\Pface{\textnormal{P}}
\newcommand\Face{\textnormal{F}}

\newcommand\coker{\textnormal{coker}}

\newcommand\Spec{\textnormal{Spec}}

\title{Homology of multiple complexes and Mayer-Vietoris spectral sequences}

\author{Marc Chardin}
\address{Institut de Mathématiques de Jussieu, CNRS \& Sorbonne Université, France}
\email{marc.chardin@imj-prg.fr, Marc.CHARDIN@cnrs.fr}

\author{Rafael Holanda}
\address{Departamento de Matemática, Universidade Federal da Paraíba - 58051-900, João Pessoa, PB, Brazil}
\email{rfh@academico.ufpb.br, rf.holanda@gmail.com}

\author{José Naéliton}
\address{Departamento de Matemática, Universidade Federal da Paraíba - 58051-900, João Pessoa, PB, Brazil}
\email{jnaeliton@gmail.com}

\begin{document}

\maketitle

\begin{abstract}
Similarities are noted in two Mayer-Vietoris spectral sequences that generalize to any number of ideals the Mayer-Vietoris exact sequence in local cohomology for two ideals. One has as first terms \v{C}ech cohomology with respect to sums of the given ideals and converge to cohomology with respect to the product of the ideals, the other has as first terms  \v{C}ech cohomology with respect to products of the given ideals and converge to cohomology with respect to the sum of the ideals. The first one was obtained by Lyubeznik in \cite{Lyu}, while the second is constructed in \cite[Chapter 2]{Hol} and could also be deduced from results in \cite{God}. We present results on the cohomology of multiple complexes that enables  to deduce both from two related constructions on multiple complexes. A key ingredient is a fact that seems not to have been noticed before: cohomology with respect to a product of ideals is the one of a subcomplex of the \v{C}ech complex computing  cohomology with respect to the sum of the given ideals; this provides a much shorter complex to compute cohomology with respect to product of ideals.
\end{abstract}

\section{Introduction}

Let $R$ be a commutative unitary ring, $I$ a finitely generated ideal of $R$ and $M$ an $R$-module. \v{C}ech cohomology modules $H^i_I(M)$ are important objects  in commutative algebra and algebraic geometry. Their vanishing is studied throughout combinatorial properties in \cite{AGZ,Lyu}, as well as the multigraded pieces of such modules are investigated in \cite{BC, Hol} (see also the references therein). Spectral sequences play a fundamental role in these works.

Similarities in the spectral sequences in \cite[Chapter 2]{Hol} and \cite{Lyu} motivated us to search for a framework where both may take place. Multicomplex theory as presented in \cite{Ver} is one. We avoid the geometric or combinatorial taste given in \cite{AGZ,God, Lyu} by approaching the spectral sequences in a more elementary way. 

There are two main spectral sequences, each one of these in two variants. They all compare cohomologies of some natural subquotient complexes attached to faces of the cone ${\mathbb R}_{\geq 0}^n$ and their interiors, for a $n$-multicomplex. To give an idea of the picture: one spectral sequence has first terms corresponding to complexes obtained from the interior of faces of all dimensions and converge to the cohomology of the total complex, while the other has first terms corresponding to the faces of all dimensions and converges to the homology of the interior of the complex.

In Section \ref{multcomp}, we develop constructions on multicomplexes and give spectral sequences out of these constructions, in order to prove Theorem \ref{fourspec}, our main result in the theory of multicomplexes. Section \ref{cohomologysection} relates cohomology of multiple \v{C}ech complexes and its interior to local cohomology with respect to sum and product of the corresponding ideals, see Theorem \ref{quasiisomorphism}. As an application of Theorem \ref{fourspec} and Theorem \ref{quasiisomorphism}, we provide four Mayer-Vietoris spectral sequences in Section \ref{fourmvss}; two of these coincide with the ones mentioned above \cite{Hol, Lyu}, they are as follows:

\begin{thm}
Let $R$ be a commutative unitary ring, $M$ be an $R$-module and $\mathfrak{a}_i$, for $1\leq i\leq n$, be finitely generated ideals. There exist two converging spectral sequences 
\begin{itemize}

\item [(1)] $E_1^{n-p,q}=\bigoplus_{i_1<\ldots<i_p}^{1\leq p\leq n}H^q_{\mathfrak{a}_{i_1}+\cdots+\mathfrak{a}_{i_p}}(M)\Rightarrow_p H^{q-(p-1)}_{\mathfrak{a}_1\cdots\mathfrak{a}_n}(M)$,\medskip

\item [(2)] $E_1^{p,q}=\bigoplus_{i_1<\ldots<i_p}^{1\leq p\leq n}H^{q}_{\mathfrak{a}_{i_1}\cdots\mathfrak{a}_{i_p}}(M)\Rightarrow_p H^{q+(p-1)}_{\mathfrak{a}_1+\cdots +\mathfrak{a}_n}(M)$.
\end{itemize}
\end{thm}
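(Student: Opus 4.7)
The plan is to apply Theorem~\ref{fourspec} to the $n$-multi-\v{C}ech complex
\begin{equation*}
\mathcal{K}^{\bullet,\ldots,\bullet} := \check{C}^\bullet(\underline{f}_1;R) \otimes_R \cdots \otimes_R \check{C}^\bullet(\underline{f}_n;R) \otimes_R M,
\end{equation*}
where $\underline{f}_i$ is a chosen finite system of generators of $\mathfrak{a}_i$, and to translate its face-cohomologies into local cohomology modules via Theorem~\ref{quasiisomorphism}. Faces of the cone $\mathbb{R}_{\geq 0}^n$ of dimension $p$ correspond naturally to $p$-element subsets $\{i_1<\cdots<i_p\} \subset \{1,\ldots,n\}$, indexing the coordinates where nonzero multidegrees are allowed.

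First, I would use Theorem~\ref{quasiisomorphism} to identify, for each such subset $S = \{i_1<\cdots<i_p\}$, the cohomology of the total complex of the face indexed by $S$ with $H^*_{\mathfrak{a}_{i_1}+\cdots+\mathfrak{a}_{i_p}}(M)$, and the cohomology of the interior of that face with $H^*_{\mathfrak{a}_{i_1}\cdots\mathfrak{a}_{i_p}}(M)$. Taking $S = \{1,\ldots,n\}$ in particular identifies the total and interior cohomologies of the whole $\mathcal{K}$ with the two local cohomology modules appearing as the right-hand sides of (1) and (2).

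Next, I would invoke Theorem~\ref{fourspec}: among the four spectral sequences it produces for any $n$-multicomplex, two have $E_1$-terms that are direct sums over $p$-dimensional faces of either face-total cohomologies (and converge to the interior cohomology of the whole multicomplex) or face-interior cohomologies (and converge to the total cohomology of the whole multicomplex). Applying this to $\mathcal{K}$ and substituting the identifications from the previous step yields the two stated spectral sequences, with the direct sum decomposition in the $E_1$-page indexed by the $\binom{n}{p}$ subsets of size $p$.

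The main obstacle will be tracking the indexing, and in particular accounting for the $\pm(p-1)$ shift between the $E_1$-cohomological degree $q$ and the degree of the target local cohomology module. This shift reflects how the filtration by face dimension interacts with the total multidegree on $\mathcal{K}$, since the interior of a $p$-dimensional face lives in multidegrees bounded away from those of lower-dimensional faces; verifying the exact value $p-1$ rather than $p$ amounts to checking the boundary conventions in the definition of the interior subcomplex and the direction of the filtration used in Theorem~\ref{fourspec}.
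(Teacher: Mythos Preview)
Your approach is essentially the paper's own: apply Theorem~\ref{fourspec} to the multiple \v{C}ech complex $C^{\underline{\bullet}}=\Cc^\bullet_{\mathbf{a}_1}\otimes\cdots\otimes\Cc^\bullet_{\mathbf{a}_n}\otimes M$ and use Theorem~\ref{quasiisomorphism} (more precisely its Corollary~\ref{quasiisomorphisms}) to translate face and interior cohomologies into local cohomologies for sums and products.

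One point deserves more care than you give it. The identification of interior cohomology with $H^*_{\mathfrak{a}_{i_1}\cdots\mathfrak{a}_{i_p}}(M)$ holds only for the \emph{augmented} interior complex ${}^+C^\bullet_{\intF_{i_1,\ldots,i_p}}$; the non-augmented interior computes $\check H^*$ instead. Correspondingly, to obtain the stated spectral sequences (with $H$ rather than $\check H$ on both sides) you must select items~(2) and~(4) of Theorem~\ref{fourspec}, not (1) and~(3). In item~(2) the range $p\neq n$ eliminates the face $\Face^*_{1,\ldots,n}=\{\underline 0\}$, which is exactly what ensures the sum over $1\leq p\leq n$ rather than $0\leq p\leq n$ after the reindexing $p\mapsto n-p$. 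This augmented/non-augmented distinction is also what accounts for the shift by $p-1$ rather than $p$: the augmentation places $M=C^{\underline 0}$ in degree $p-1$, so ${}^+C^\bullet_{\intF_{i_1,\ldots,i_p}}$ is the complex $\Cc^\bullet_{\mathbf{a}_{i_1},\ldots,\mathbf{a}_{i_p}}(M)$ and Corollary~\ref{quasiisomorphisms}(a) gives the shift directly. So the ``obstacle'' you anticipate is resolved precisely by choosing the augmented variants.
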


The similarities between these two spectral sequences relating \v Cech cohomologies was the starting point of our work.

\section{Multicomplexes and spectral sequences}\label{multcomp}

\subsection{Setup}

We begin this section recalling the formal definition of a multicomplex in an abelian category; see \cite[Chap. I,  \S 2]{Ver} for more details.

\begin{defn}\label{defMC}
Let $n$ be a positive integer. A commutative (resp. anticommutative) $n$-multicomplex $C^\bb$ is a family of objects $C^\bq$ for all $\bq=(q_1,\ldots,q_n)\in\ZZ^n=\oplus_{i=1}^{n}\ZZ e_i$ together with a family of homomorphisms
$d^{\bq,i}: C^\bq\rightarrow C^{\bq+e_i}$
for all $i=1,\ldots,n$ such that conditions (a) and (b) (resp. (a) and (b')) are satisfied :
\begin{itemize}
    \item [(a)] $d^{\bq+e_i,i}\circ d^{\bq,i}=0$ for all $\bq\in\ZZ^n$ and $i=1,\ldots,n$;
    \item [(b)] $d^{\bq+e_j,i}\circ d^{\bq,j}=d^{\bq+e_i,j}\circ d^{\bq,i}$ for all $\bq\in\ZZ^n$ and $i,j=1,\ldots,n$;
        \item [(b')] $d^{\bq+e_j,i}\circ d^{\bq,j}=-d^{\bq+e_i,j}\circ d^{\bq,i}$ for all $\bq\in\ZZ^n$ and $i,j=1,\ldots,n$.
\end{itemize}
\end{defn}

There are many ways to transform a commutative multicomplex into an anticommutative one, and vice-versa. The transformation $\sigma : d^{\bq,i}\mapsto (-1)^{q_1+\ldots+q_{i-1}}d^{\bq,i}$ in \cite{Ver} is one such option; it satisfies $\sigma \circ \sigma (C^\bb ) =C^\bb$.


As in the case of double complexes, the total complex of a multicomplex is defined,

\begin{defn}
Let $C^\bb$ be a  $n$-multicomplex. The total complex (or totalization) $T(C^\bb)^\bullet$ of $C^\bb$ is defined by
\begin{itemize}
\item [(a)] $T(C^\bb)^m=\bigoplus_{q_1+\ldots+q_n=m}C^\bq,\ \forall m\geq0;$ 
\end{itemize}
$d^m:T(C^\bb)^m \rightarrow T(C^\bb)^{m+1}$  for $x\in C^\bq$ with $q_1+\ldots+q_n=m$ is given by
\begin{itemize}
\item [(b)] $d^m(x):=\sum_{i=1}^n \sigma (d^{\bq,i})(x)$ if $C^\bb$ is commutative;
\item [(b')] $d^m(x):=\sum_{i=1}^n d^{\bq,i}(x)$ if $C^\bb$ is anticommutative.
\end{itemize}
\end{defn}

It is a routine exercise to verify that $T(C^\bb)^\bullet$ is indeed a complex.

Despite the possibilities of sign changes on the differentials that keeps the property of being a multiple complex, it is important to notice that every kernel or cokernel of these maps, as well as objects constructed from these -- typically ones obtained at pages of some spectral sequences in our work -- are unchanged, up to the sign of some maps involved. In practice, depending on the situation, one may for instance choose a commutative version or an anticommutative version in order to make a proof more transparent by avoiding part of the sign tracking that could obscure the proof.

Let $C^{\underline{\bullet}}$ be a $n$-multicomplex with components $C^\bq$ verifying $C^\bq=0$ for $\bq\not\in \NN^n=\oplus_{i=1}^{n}\NN e_i$,  with $e_i$ from Definition \ref{defMC}. For such a multicomplex, we will consider several complexes attached to the faces of the polyhedral cone ${\mathbb R}_{\geq 0}^n$ :

For $i_1<\cdots <i_p$, let 

-- $\Face_{i_1,\ldots ,i_p}:=\NN e_{i_1}\oplus \cdots \oplus \NN e_{i_p}$, denote a $p$-dimensional face, 


-- $\Pface_{i_1,\ldots ,i_p} := \Face_{i_1,\ldots ,i_p}\setminus \underline 0$,  the corresponding punctured $p$-dimensional face, 

-- $\intF_{i_1,\ldots ,i_p}:=\{ \bq\in \NN^n\ \vert \ q_i= 0\Leftrightarrow i\not\in \{ i_1,\ldots ,i_p\}\}$,  the interior of this $p$-dimensional face,


-- $\compF_{i_1,\ldots ,i_p} :=\NN^n\setminus \Face_{i_1,\ldots ,i_p} $,  the complement of this $p$-dimensional face.

For instance, when $n=2$, one has for example (full dots show corresponding sets) :

$
\begin{matrix}
{\xymatrix@R=0pt@C=0pt{
\vdots&\vdots&\vdots&\vdots&\vdots&\\
\bullet&\circ&\circ&\circ&\circ&\cdots\\
\bullet&\circ&\circ&\circ&\circ&\cdots\\
\bullet&\circ&\circ&\circ&\circ&\cdots\\
\bullet&\circ&\circ&\circ&\circ&\cdots\\
\bullet&\circ&\circ&\circ&\circ&\cdots\\}}\\
F_2
\end{matrix}
$, \  
$
\begin{matrix}
{\xymatrix@R=0pt@C=0pt{
\vdots&\vdots&\vdots&\vdots&\vdots&\\
\circ&\circ&\circ&\circ&\circ&\cdots\\
\circ&\circ&\circ&\circ&\circ&\cdots\\
\circ&\circ&\circ&\circ&\circ&\cdots\\
\circ&\circ&\circ&\circ&\circ&\cdots\\
\circ&\bullet&\bullet&\bullet&\bullet&\cdots\\}}\\
P_1=I_1
\end{matrix}
$, \ 
$
\begin{matrix}
{\xymatrix@R=0pt@C=0pt{
\vdots&\vdots&\vdots&\vdots&\vdots&\\
\bullet&\bullet&\bullet&\bullet&\bullet&\cdots\\
\bullet&\bullet&\bullet&\bullet&\bullet&\cdots\\
\bullet&\bullet&\bullet&\bullet&\bullet&\cdots\\
\bullet&\bullet&\bullet&\bullet&\bullet&\cdots\\
\circ&\bullet&\bullet&\bullet&\bullet&\cdots\\}}\\
P_{1,2}
\end{matrix}
$, \ 
$
\begin{matrix}
{\xymatrix@R=0pt@C=0pt{
\vdots&\vdots&\vdots&\vdots&\vdots&\\
\circ&\bullet&\bullet&\bullet&\bullet&\cdots\\
\circ&\bullet&\bullet&\bullet&\bullet&\cdots\\
\circ&\bullet&\bullet&\bullet&\bullet&\cdots\\
\circ&\bullet&\bullet&\bullet&\bullet&\cdots\\
\circ&\circ&\circ&\circ&\circ&\cdots\\}}\\
I_{1,2}
\end{matrix}
$

These sets could as well be defined in terms of coordinates that are zero on these faces, namely $\Face_{i_1,\ldots ,i_p}^*:=\Face_{\{ 1,\ldots ,n\}\setminus \{ i_1,\ldots ,i_p\} }$ and similarly for the three other sets.

Notice that $\Face_{1,\ldots ,n}=\Face^*_\emptyset =\NN^n$ and  $\Face_\emptyset =\Face^*_{1,\ldots ,n}=\underline 0$.

Write $C^{\bb}_{\compF_{i_1,\ldots ,i_p}}$ for the subcomplex of $C^{\bb}$ obtained by replacing $C^\bq$ by the zero module unless $\bq\in \compF_{i_1,\ldots ,i_p}$. It is indeed a subcomplex since $\compF_{i_1,\ldots ,i_p}+e_i \subseteq \compF_{i_1,\ldots ,i_p}$ for any $i\geq1$. 

Similarly, we define $C^{\bb}_{\Face_{i_1,\ldots ,i_p}}$ and notice that $C^\bb_{\Face_{i_1,\ldots,i_p}}=C^\bb/C^\bb_{\compF_{i_1,\ldots,i_p}}$ is a quotient of $C^\bb$. 

We denote by $C^{\bb}_{\Pface_{i_1,\ldots ,i_p}}$  and  $C^{\bb}_{\intF_{i_1,\ldots ,i_p}}$ the subcomplexes of $C^{\bb}_{\Face_{i_1,\ldots ,i_p}}$ obtained by replacing by zero the modules for $\bq\not\in \Pface_{i_1,\ldots ,i_p}$ and  $\bq\not\in \intF_{i_1,\ldots ,i_p}$, respectively.

We finally define an augmented version ${^+{C}}^{\bb}_{\intF_{i_1,\ldots ,i_p}}$ of $C^{\bb}_{\intF_{i_1,\ldots ,i_p}}$ for $p>0$ by adding as module $C^{\underline 0}$ in homological degree $e_{i_1}+\cdots +e_{i_{p-1}}$ and adding the map
$$
\xymatrix{
C^{\underline 0}[e_{i_1}+\cdots +e_{i_{p-1}}]\ar^(.55){ d^{e_{i_1}+\cdots +e_{i_{p-1}},i_{p} }\circ \;\cdots \;\circ \; d^{e_{i_{1}},i_{2}}\; \circ \; d^{\underline 0,i_1}}[rrrrrr]&&&&&&C^{e_{i_1}+\cdots +e_{i_{p}}}.\\
}
$$
Notice that  $C^{\bb}_{\intF_{i_1,\ldots ,i_p}}$ starts in total homological degree $p$ with unique summand $C^{e_{i_1}+\cdots +e_{i_{p}}}$.






The totalizations of these complexes will be denoted respectively by $C^{\bullet}_{\Face_{i_1,\ldots ,i_p} }$, $C^{\bullet}_{\Pface_{i_1,\ldots ,i_p} }$, $C^{\bullet}_{\intF_{i_1,\ldots ,i_p}}$ and ${^+{C}}^{\bullet}_{\intF_{i_1,\ldots ,i_p}}$. These start respectively in homological degree 0 or higher, 1 or higher, $p$ or higher and $p-1$ or higher.

As the sets $\Face^*_{i_1,\ldots ,i_p}$, $\Pface^*_{i_1,\ldots ,i_p}$, $\intF^*_{i_1,\ldots ,i_p}$ or $\compF^*_{i_1,\ldots ,i_p}$ are respectively equal to $\Face_{j_1,\ldots ,j_{n-p}}$, $\Pface_{j_1,\ldots ,j_{n-p}}$, $\intF_{j_1,\ldots ,j_{n-p}}$ or $\compF_{j_1,\ldots ,j_{n-p}}$ for $ \{ j_1,\ldots ,j_{n-p}\}:=\{ 1,\ldots ,n\}\setminus \{ i_1,\ldots ,i_p\} $, we may alternatively use this other notation.

Finally, ${\Pface}:=\Pface_{1,\ldots ,n}$ and ${\intF}:= {\intF}_{1,\ldots ,n}$.

In Section \ref{cohomologysection}, we will see that these four types of complexes provide natural cohomologies with respect to sums or products for a multiple complex constructed from \v{C}ech complexes with respect to several ideals.

\subsection{Spectral sequences arising from multicomplexes}  

We will now show the following result by constructing the corresponding four spectral sequences:

\begin{thm}\label{fourspec}
Let $C^{\underline{\bullet}}$ be $n$-multicomplex satisfying $C^\bq=0$ for $\bq\not\in \NN^n=\oplus_{i=1}^{n}\NN e_i$.
Then there exist four convergent spectral sequences as follows:

\begin{itemize}
\item[(1)] $E^{p,q}_1=\oplus_{i_1<\cdots <i_p}H^q(C^{\bullet}_{\Face^*_{i_1,\ldots ,i_p}})\Rightarrow H^{p+q}(C^{\bullet}_{\intF})$,

\item[(2)] $E^{p,q}_1=\oplus_{{i_1<\cdots <i_p}\atop{p\not= n}}H^q(C^{\bullet}_{\Face^*_{i_1,\ldots ,i_p}})\Rightarrow H^{p+q}({^+{C}}^{\bullet}_{\intF})$,

\item[(3)] $E_1^{p,q}=\oplus_{i_1<\ldots<i_{p}}H^{p+q}(C^{\bullet}_{\intF_{i_1,\ldots ,i_p} })\Rightarrow H^{p+q}(C^{\bullet}_{\Pface})$,

\item[(4)] $E_1^{p,q}=\oplus_{i_1<\ldots<i_{p}}H^{p+q}({^+{C}}^{\bullet}_{\intF_{i_1,\ldots ,i_p}})\Rightarrow H^{p+q}(C^{\bullet})$.
\end{itemize}

\end{thm}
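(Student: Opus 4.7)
The plan is to produce each of the four spectral sequences as one of the two filtration spectral sequences of an explicit double complex. For (1), (2) the double complex is built from the quotient complexes $C^{\bb}_{\Face^*_J}$, while for (3), (4) it is built from the subquotient complexes $C^{\bb}_{\intF_J}$. In each pair the ``augmented'' variant (2) or (4) differs from the ``unaugmented'' variant (1) or (3) by modifying a single boundary column or row of the double complex --- truncation of a column for (2), insertion of an extra row of copies of $C^{\underline 0}$ for (4). In every case one of the two spectral sequences of the double complex produces the claimed $E_1$-page by direct computation of a vertical cohomology, while the other is used (either by collapse, or via a short exact sequence involving a standard acyclic complex) to identify the total complex with the stated target up to quasi-isomorphism.

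\textbf{Spectral sequences (1) and (2).} I would set $D^{p,q}:=\bigoplus_{i_1<\cdots<i_p}T(C^{\bb}_{\Face^*_{i_1,\ldots,i_p}})^q$ for $0\le p\le n$, with vertical differential the totalization differentials of each $C^{\bb}_{\Face^*_J}$ and horizontal differential the alternating-sign \v Cech-type map induced by the quotients $C^{\bb}_{\Face^*_J}\twoheadrightarrow C^{\bb}_{\Face^*_{J\cup\{i\}}}$. Filtering by $p$ gives $E_1^{p,q}=\bigoplus_{|J|=p}H^q(C^\bullet_{\Face^*_J})$, converging to $H^{p+q}(\text{tot}(D))$. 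To identify $\text{tot}(D)$, observe that at fixed $\bq$ the horizontal complex $D^{*,\bq}$ is (after tensoring with $C^\bq$) the augmented cochain complex of the simplex on $S(\bq):=\{j:q_j=0\}$; by contractibility this is acyclic when $S(\bq)\neq\emptyset$ and leaves only $C^\bq$ at $p=0$ when $\bq\in\intF$, forcing $\text{tot}(D)\simeq C^\bullet_{\intF}$ and proving (1). For (2), drop the column $D^{n,*}$ --- which is $C^{\underline 0}$ concentrated in total degree $n$ --- to obtain $\widetilde D$, and use the short exact sequence $0\to D^{n,*}\to\text{tot}(D)\to\text{tot}(\widetilde D)\to 0$. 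The induced connecting map $C^{\underline 0}\to H^n(C^\bullet_{\intF})$ is precisely the iterated differential $d^{e_1+\cdots+e_{n-1},n}\circ\cdots\circ d^{\underline 0,1}$, so the long exact sequence identifies $\text{tot}(\widetilde D)$ with ${^+C}^\bullet_{\intF}$; the $p$-filtration on $\widetilde D$ then yields (2).

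\textbf{Spectral sequences (3) and (4).} I would set $A^{p,q}:=\bigoplus_{i_1<\cdots<i_p}T(C^{\bb}_{\intF_{i_1,\ldots,i_p}})^{p+q}$ for $p\ge 1$, $q\ge 0$, with vertical differential the totalization differentials of each $C^{\bb}_{\intF_J}$ and horizontal differential, for $\bq\in\intF_J$ and $i\notin J$, given by $d^{\bq,i}\colon C^\bq\to C^{\bq+e_i}\subset C^{\bb}_{\intF_{J\cup\{i\}}}$ equipped with alternating \v Cech-type signs. Every nonzero multi-index sits in a unique bidegree $(|\supp\bq|,|\bq|-|\supp\bq|)$, so $\text{tot}(A)=C^\bullet_{\Pface}$ literally and the $p$-filtration yields (3). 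For (4), extend $A$ to $A'$ by adjoining the row $A'^{p,-1}:=\bigoplus_{|J|=p}C^{\underline 0}$ for $p\ge 1$, with vertical map on the $J$-summand the augmentation $C^{\underline 0}\to C^{e_{i_1}+\cdots+e_{i_p}}$ used in the definition of ${^+C}^\bb_{\intF_J}$, and horizontal map the alternating-sign \v Cech map between copies of $C^{\underline 0}$. Each column $A'^{p,*}$ is then (a degree shift of) $\bigoplus_{|J|=p}{^+C}^\bullet_{\intF_J}$, so the $p$-filtration produces the $E_1$-page of (4). To identify $\text{tot}(A')$ with $C^\bullet$, use the short exact sequence $0\to\text{tot}(A)\to\text{tot}(A')\to Q\to 0$, where the quotient $Q$ is the \v Cech-like complex $\bigoplus_iC^{\underline 0}\to\bigoplus_{i<j}C^{\underline 0}\to\cdots\to C^{\underline 0}$ (the simplicial cochain complex of $\Delta^{n-1}$ with its augmentation removed), which has cohomology $C^{\underline 0}$ concentrated in degree $0$. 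Its connecting map $C^{\underline 0}\to H^1(C^\bullet_{\Pface})$ is $x\mapsto(d^{\underline 0,i}(x))_i$, which agrees with the connecting map of the tautological short exact sequence $0\to C^\bullet_{\Pface}\to C^\bullet\to C^{\underline 0}\to 0$ (with the quotient concentrated in degree $0$); a diagram chase of the two long exact sequences gives $\text{tot}(A')\simeq C^\bullet$.

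\textbf{Main obstacle.} The conceptual structure is transparent, but the technical heart of the argument is the sign bookkeeping for the horizontal \v Cech-type differentials in $D$ and $A'$ needed to ensure $d_h^2=0$ and $d_h d_v+d_v d_h=0$. For $A'$ in particular, one must check that the iterated compositions of $d^{\cdot,i}$'s defining the augmentations commute, up to the chosen sign conventions, with the horizontal \v Cech differentials; this rests on the multicomplex relations (b)/(b'), and the freedom noted in the paper to switch between commutative and anticommutative conventions via the $\sigma$-transformation should be used to keep this verification manageable. A subsidiary point is the explicit identification, in both (2) and (4), of the connecting homomorphism of the long exact sequence with the specific iterated composition / \v Cech differential built into the definition of the augmented complexes.
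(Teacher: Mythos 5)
Your strategy is correct and, for items (1) and (3), is essentially the paper's own: the complex $D^{p,q}$ you build is exactly the double complex $Q^{p,q}$ of Proposition~\ref{mainspectral} (the Koszul-cone over $C^{\bb}$ restricted to faces), and your ``support-size'' double complex $A^{p,q}$ is exactly the associated graded of the filtration $F_p=\oplus_{\bq\in X_p}C^\bq$ used in Proposition~\ref{ss3}; both identifications of the total complex (acyclicity of the augmented simplex, resp.\ the decomposition of $\Pface$ by support) match the paper's Koszul-exactness argument.

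For (2) and (4) your route differs slightly from the paper's and is marginally harder. For (4), the paper adjoins the \emph{full} hypercube $T^{\bb}(C^{\underline 0})$ (all $2^n$ vertices, including $J=\emptyset$) mapping into $C^{\bb}$ via the iterated differentials $\psi_J$, so that the graded pieces $F_p/F_{p+1}$ are already, for $p\geq 1$, the augmented complexes ${^{+}C}^\bullet_{\intF_J}$ by construction, the $p=0$ piece is the acyclic $C^{\underline 0}\xrightarrow{\pm\mathrm{id}}C^{\underline 0}$, and the identification of the abutment with $H^\bullet(C^\bullet)$ is \emph{immediate} from exactness of the hypercube's total complex --- no connecting map needs to be computed. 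Your $A'$ keeps only the punctured hypercube (nonempty $J$) and therefore requires the extra long-exact-sequence comparison and the explicit identification of a connecting homomorphism; adding the one missing module $A'^{0,-1}=C^{\underline 0}$ (with identity map to $A^{\ast,\ast}$'s degree-$0$ part $C^{\underline 0}$) reproduces the paper's construction and removes that step. For (2), you are performing the same transgression computation as the paper (Proposition~\ref{augmentedmainspectral}), just dressed as a connecting map of a short exact sequence rather than as the $d_n$-differential of the second spectral sequence of $Q_-$; the zigzag required to pull a class of $H^n(D^{n,\ast})$ back through the quasi-isomorphism $C^\bullet_\intF\hookrightarrow\mathrm{tot}(D)$ is literally the zigzag the paper executes via Weibel's construction. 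One caution: you label that computation a ``subsidiary point,'' but it is in fact the entire content of the paper's proof of Proposition~\ref{augmentedmainspectral} and the only non-formal part of (2); your proposal would not be complete without carrying it out.
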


These will follow from natural filtrations on simple explicit constructions from $C^{\underline{\bullet}}$, or on $C^{\underline{\bullet}}$ itself. The rest of this section is devoted to detail these constructions.\\

Our four Mayer-Vietoris spectral sequences will be direct corollaries of these, in view of the results of Section \ref{cohomologysection}.\\

We start by a construction that will be used for the two first spectral sequences.

Write $D^{\bullet,\bb}$ for the subcomplex of the $(n+1)$-multicomplex  $K^\bullet (1,\ldots ,1;C^{\bb})$, where $1$ stands for the identity map, with components the subcomplexes 
$$
D^{p,\bb}:=\bigoplus_{i_1,\ldots ,i_p}C^{\bb}_{\compF_{i_1,\ldots ,i_p}^*}e_{i_1}\wedge\cdots \wedge e_{i_p}\subseteq K^p({{\underbrace{1,\ldots ,1}_{n\ {\rm times}}}};C^{\bb})=\bigoplus_{i_1,\ldots ,i_p}C^{\bb}e_{i_1}\wedge\cdots \wedge e_{i_p}.
$$
where $K^\bullet(1,\ldots,1; - )$ stands for the Koszul complex of sequence $1,\ldots,1$ -- such a Koszul complex is exact, since $n\geq 1$, by \cite[1.6.4]{BH}.

Finally we set $Q^{\bullet,\bb}:=\coker (D^{\bullet,\bb}\ra K^\bullet (1,\ldots ,1;C^{\bb}))$. 

Notice that $Q^{p,\bb}=\bigoplus_{i_1,\ldots ,i_p}C^{\bb}_{\Face^*_{i_1,\ldots ,i_p}}e_{i_1}\wedge\cdots \wedge e_{i_p}$; in other words $Q^{p,\bb}$ is the direct sum of the $(n-p)$-multicomplexes  induced on  $\Face^*_{i_1,\ldots ,i_p}$ by restriction.\\

\textbf{Illustration for the case of double complexes:}

$
\begin{matrix}
{\xymatrix@R=0pt@C=0pt{
\vdots&\vdots&\vdots&\vdots&\vdots\\
\circ&\circ&\circ&\circ&\circ&\cdots\\
\circ&\circ&\circ&\circ&\circ&\cdots\\
\circ&\circ&\circ&\circ&\circ&\cdots\\
\circ&\circ&\circ&\circ&\circ&\cdots\\
\circ&\circ&\circ&\circ&\circ&\cdots\\}}\\
D^{0,\bullet\bullet}\\
\end{matrix}
$
\ \ \ 
$
\begin{matrix}
{\xymatrix@R=0pt@C=0pt{
\vdots&\vdots&\vdots&\vdots&\vdots&\\
\bullet&\bullet&\bullet&\bullet&\bullet&\cdots\\
\bullet&\bullet&\bullet&\bullet&\bullet&\cdots\\
\bullet&\bullet&\bullet&\bullet&\bullet&\cdots\\
\bullet&\bullet&\bullet&\bullet&\bullet&\cdots\\
\circ&\circ&\circ&\circ&\circ&\cdots\\}}\\
D^{1,\bullet\bullet}\ \ [e_1]\\
\end{matrix}
$
\ \ \ 
$
\begin{matrix}
{\xymatrix@R=0pt@C=0pt{
\vdots&\vdots&\vdots&\vdots&\vdots&\\
\circ&\bullet&\bullet&\bullet&\bullet&\cdots\\
\circ&\bullet&\bullet&\bullet&\bullet&\cdots\\
\circ&\bullet&\bullet&\bullet&\bullet&\cdots\\
\circ&\bullet&\bullet&\bullet&\bullet&\cdots\\
\circ&\bullet&\bullet&\bullet&\bullet&\cdots\\}}\\
D^{1,\bullet\bullet}\ \ [e_2]\\
\end{matrix}
$
\ \ \ 
$
\begin{matrix}
{\xymatrix@R=0pt@C=0pt{
\vdots&\vdots&\vdots&\vdots&\vdots&\\
\bullet&\bullet&\bullet&\bullet&\bullet&\cdots\\
\bullet&\bullet&\bullet&\bullet&\bullet&\cdots\\
\bullet&\bullet&\bullet&\bullet&\bullet&\cdots\\
\bullet&\bullet&\bullet&\bullet&\bullet&\cdots\\
\circ&\bullet&\bullet&\bullet&\bullet&\cdots\\}}\\
D^{2,\bullet\bullet}\ \ [e_1\wedge e_2]\\
\end{matrix}
$

$
\begin{matrix}
{\xymatrix@R=0pt@C=0pt{
\vdots&\vdots&\vdots&\vdots&\vdots\\
\bullet&\bullet&\bullet&\bullet&\bullet&\cdots\\
\bullet&\bullet&\bullet&\bullet&\bullet&\cdots\\
\bullet&\bullet&\bullet&\bullet&\bullet&\cdots\\
\bullet&\bullet&\bullet&\bullet&\bullet&\cdots\\
\bullet&\bullet&\bullet&\bullet&\bullet&\cdots\\}}\\
Q^{0,\bullet\bullet}\\
\end{matrix}
$
\ \ \ 
$
\begin{matrix}
{\xymatrix@R=0pt@C=0pt{
\vdots&\vdots&\vdots&\vdots&\vdots&\\
\circ&\circ&\circ&\circ&\circ&\cdots\\
\circ&\circ&\circ&\circ&\circ&\cdots\\
\circ&\circ&\circ&\circ&\circ&\cdots\\
\circ&\circ&\circ&\circ&\circ&\cdots\\
\bullet&\bullet&\bullet&\bullet&\bullet&\cdots\\}}\\
Q^{1,\bullet\bullet}\ \ [e_1]\\
\end{matrix}
$
\ \ \ 
$
\begin{matrix}
{\xymatrix@R=0pt@C=0pt{
\vdots&\vdots&\vdots&\vdots&\vdots&\\
\bullet&\circ&\circ&\circ&\circ&\cdots\\
\bullet&\circ&\circ&\circ&\circ&\cdots\\
\bullet&\circ&\circ&\circ&\circ&\cdots\\
\bullet&\circ&\circ&\circ&\circ&\cdots\\
\bullet&\circ&\circ&\circ&\circ&\cdots\\}}\\
Q^{1,\bullet\bullet}\ \ [e_2]\\
\end{matrix}
$
\ \ \ 
$
\begin{matrix}
{\xymatrix@R=0pt@C=0pt{
\vdots&\vdots&\vdots&\vdots&\vdots&\\
\circ&\circ&\circ&\circ&\circ&\cdots\\
\circ&\circ&\circ&\circ&\circ&\cdots\\
\circ&\circ&\circ&\circ&\circ&\cdots\\
\circ&\circ&\circ&\circ&\circ&\cdots\\
\bullet&\circ&\circ&\circ&\circ&\cdots\\}}\\
Q^{2,\bullet\bullet}\ \ [e_1\wedge e_2]\\
\end{matrix}
$

\begin{prop}\label{mainspectral} With notations as above, the following holds:
\begin{itemize}
\item [(a)] For any $\bq\in \NN^n$, $H^p (D^{\bullet ,\bq})=0$ for $p\not=1$, $H^p (Q^{\bullet ,\bq})=0$ for $p\not= 0$ and 
$$
H^1 (D^{\bullet ,\bq})=H^0 (Q^{\bullet ,\bq})=C^{\bq}_{\intF}.
$$

\item [(b)] Let $Q^\bullet$ be the totalization of $Q^{\bullet,\bb}$, then 
$$
H^i (Q^\bullet)\simeq H^{i}(C^{\bullet}_{\intF}), \forall i.
$$

\item [(c)] There is a spectral sequence,
$$
E^{p,q}_1=\oplus_{i_1<\cdots <i_p}H^q(C^{\bullet}_{\Face^*_{i_1,\ldots ,i_p}})\Rightarrow H^{p+q}(C^{\bullet}_{\intF} ).
$$
\end{itemize}
\end{prop}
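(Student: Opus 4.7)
The plan is to deduce the three parts from a fiberwise Koszul computation followed by a standard double-complex spectral sequence argument. For (a), fix $\bq\in\NN^n$ and set $T(\bq):=\{i\,:\,q_i>0\}$, $S(\bq):=\{i\,:\,q_i=0\}$. The short exact sequence $0\to D^{\bullet,\bq}\to K^\bullet(1,\ldots,1;C^{\bq})\to Q^{\bullet,\bq}\to 0$ splits as graded modules, so $Q^{\bullet,\bq}$ is identified with the subquotient spanned by those $e_{i_1}\wedge\cdots\wedge e_{i_p}$ with $\{i_1,\ldots,i_p\}\subseteq S(\bq)$. The condition defining $D^{\bullet,\bq}$, namely $\{i_1,\ldots ,i_p\}\cap T(\bq)\neq\emptyset$, is preserved by adjoining any new index to $\{i_1,\ldots,i_p\}$, so the ascending Koszul differential $d(e_I)=\sum_{j\notin I}\pm e_{I\cup j}$ makes $D^{\bullet,\bq}$ a subcomplex; the descending version does not. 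The induced differential on $Q^{\bullet,\bq}$ is then the ascending Koszul differential on $|S(\bq)|$ copies of the identity, tensored with $C^{\bq}$. When $\bq\in\intF$, $S(\bq)=\emptyset$ and $Q^{\bullet,\bq}$ is just $C^{\bq}$ in degree $0$; when $\bq\notin\intF$, $|S(\bq)|\geq 1$ and the complex is exact (Koszul on a sequence containing a unit). In all cases $H^p(Q^{\bullet,\bq})=0$ for $p\neq 0$ and $H^0(Q^{\bullet,\bq})=C^{\bq}_{\intF}$. Since $K^\bullet(1,\ldots,1;C^{\bq})$ is itself exact for $n\geq 1$, the long exact cohomology sequence gives $H^{p+1}(D^{\bullet,\bq})\simeq H^p(Q^{\bullet,\bq})$, which establishes the remaining claims in (a).

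For (b) and (c), I view $Q^{\bullet,\bb}$ as a double complex with horizontal direction the Koszul index $p$ (bounded, $0\leq p\leq n$) and vertical direction the totalization of the $n$ differentials of $C^{\bb}$. Since the horizontal direction is finite, both standard spectral sequences of this double complex converge strongly to $H^*(Q^\bullet)$. Taking Koszul cohomology first, part (a) collapses $E_1$ to the single column $p=0$, with $E_1^{0,q}=\bigoplus_{|\bq|=q}C^{\bq}_{\intF}=(C^{\bullet}_{\intF})^q$, and the induced $d_1$ is the totalization differential of $C^{\bullet}_{\intF}$; hence $E_2^{0,q}=H^q(C^{\bullet}_{\intF})$ and the sequence degenerates, giving (b). Taking the $\bb$-direction cohomology first on each column $Q^{p,\bb}=\bigoplus_{i_1<\cdots<i_p}C^{\bb}_{\Face^*_{i_1,\ldots,i_p}}$ yields $E_1^{p,q}=\bigoplus_{i_1<\cdots<i_p}H^q(C^{\bullet}_{\Face^*_{i_1,\ldots,i_p}})$, which converges to $H^{p+q}(Q^\bullet)=H^{p+q}(C^{\bullet}_{\intF})$ by (b); this is (c).

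The one subtlety is the sign/direction convention for the Koszul differential in (a): one must use the ascending version so that the subset condition defining $D^{\bullet,\bq}$ is stable under $d$. Once this is pinned down, the rest is formal double-complex bookkeeping.
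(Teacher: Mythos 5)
Your proof is correct and follows essentially the same route as the paper's: identify $Q^{\bullet,\bq}$ degreewise with a Koszul complex on the zero coordinates of $\bq$ (so it is exact unless $\bq\in\intF$), transfer to $D^{\bullet,\bq}$ via the long exact sequence using exactness of $K^\bullet(1,\ldots,1;C^\bq)$, and then run the two spectral sequences of the double complex $Q^{\bullet,\bullet}$ for parts (b) and (c). Your remark that the wedge (ascending) Koszul differential is the one that makes $D^{\bullet,\bb}$ a subcomplex is a useful clarification of a point the paper leaves implicit.
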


\begin{proof}
Since $K^\bullet (1,\ldots ,1;C^{\bq })$ is exact for any $\bq$ (as $n\geq 1$),  (a) is equivalent to the exactness of $Q^{\bullet ,\bq}$ for $\bq\not\in \intF$, because if $\bq\in \intF$ then $Q^{p ,\bq}=0$ unless $p=0$ and $Q^{0 ,\bq}=C^{\bq}$.

Assume that $\bq$ has exactly $t\geq 1$ coordinates equal to zero and $\bq \in \Face^*_{j_1,\ldots ,j_t}$. Then $Q^{\bullet ,\bq}$ is the exact subcomplex 
$$
K^p({{\underbrace{1,\ldots ,1}_{t\ {\rm times}}}};C^{\bq})\subseteq K^p({{\underbrace{1,\ldots ,1}_{n\ {\rm times}}}};C^{\bq})
$$
that corresponds to summands indexed by $e_{i_1}\wedge\cdots \wedge e_{i_p}$ for $\{ i_1,\ldots ,i_p \}\subseteq \{ j_1,\ldots ,j_t \}$.

For (b), denote by $Q^{\bullet ,\bullet}$ the double complex obtained by totalizing along $\NN^n$ the complex $Q^{\bullet,\bb}$. By (a), for any $q\in \NN$, $H^p(Q^{\bullet ,q})=0$ for $p\not= 0$ and $H^0 (Q^{\bullet ,q})=C^{q}_{\intF}$; hence $Q^\bullet$ is quasi-isomorphic to $C^{\bullet}_{\intF}$ and the conclusion follows.

For (c)  recall that $Q^{p,\bb}=\bigoplus_{i_1,\ldots ,i_p}C^{\bb}_{\Face^*_{i_1,\ldots ,i_p}}e_{i_1}\wedge\cdots \wedge e_{i_p}$, hence the second spectral sequence for $Q^{\bullet ,\bullet}$ has first terms $H^{q}(Q^{p,\bullet})=\oplus_{i_1<\cdots <i_p}H^q(C^{\bullet}_{\Face^*_{i_1,\ldots ,i_p}})$ and abuts to the cohomology of $Q^\bullet$ that is in turn  isomorphic to the one of $C^{\bullet}_{\intF}$ by (b).
\end{proof}

\begin{rem}
    
    Alternatively,  one can consider the double complex $F^{p,q}$ defined by $F^{p,q}:=Q^{p,q}$ unless $p=-1$ and $F^{-1,q}:=C^{q}_{\intF}$ and then obtain a spectral sequence
$$
E^{p,q}_1=H^q (F^{p,\bullet})\Rightarrow 0
$$
with $H^q (F^{p,\bullet})=\oplus_{i_1<\cdots <i_p}H^q(C^{\bullet}_{\Face^*_{i_1,\ldots ,i_p}})$ for $p\not= -1$ and $H^q (F^{-1,\bullet})=H^q(C^{\bullet}_{\intF})$.
\end{rem}

The variant of the spectral sequence in part (c) of Proposition \ref{mainspectral} that provides the second spectral sequence in the main theorem is as follows:

\begin{prop}\label{augmentedmainspectral}
With notations as in Proposition \ref{mainspectral}, there is a spectral sequence 
$$
E^{p,q}_1=\oplus_{i_1<\cdots <i_p}H^q(C^{\bullet}_{\Face^*_{i_1,\ldots ,i_p}})\Rightarrow H^{p+q}({^+{C}}^\bullet_\intF)
$$
with $p$ in the range $0\leq p<n$ (i.e. $E^{p,q}_1=0$ for any $p\geq n$ and $q$).
\end{prop}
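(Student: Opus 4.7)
The plan is to mimic the construction of Proposition \ref{mainspectral} but with the Koszul direction brutally truncated in degree $n - 1$. Concretely, set $Q'^{p, \bb} := Q^{p, \bb}$ for $0 \leq p < n$ and $Q'^{n, \bb} := 0$. Since $Q^{n, \bb} = C^\bb_{\Face^*_{1, \ldots, n}} \cdot e_1 \wedge \cdots \wedge e_n$ is the single module $C^{\underline 0}$ placed at $\bq = \underline 0$, this defines a short exact sequence of $(n+1)$-multicomplexes
\[
0 \to C^{\underline 0}[-n] \to Q^{\bullet, \bb} \to Q'^{\bullet, \bb} \to 0,
\]
where $C^{\underline 0}[-n]$ denotes $C^{\underline 0}$ concentrated in total degree $n$.

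Next I would run the column-filtration spectral sequence on the double complex obtained by totalizing $Q'^{\bullet, \bb}$ in the $\bb$-directions. Since $Q'^{p, \bb} = \bigoplus_{i_1 < \cdots < i_p} C^\bb_{\Face^*_{i_1, \ldots, i_p}}$ for $p < n$ and $Q'^{p, \bb} = 0$ otherwise, the $E_1$-page takes exactly the shape stated in the proposition, and the sequence converges to $H^{p+q}(Q'^\bullet)$, where $Q'^\bullet$ denotes the totalization. The remaining task is therefore to identify this limit with $H^{p+q}({^+{C}}^\bullet_{\intF})$.

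Totalizing the short exact sequence and applying Proposition \ref{mainspectral}(b), the long exact sequence in cohomology collapses to
\[
0 \to H^{n-1}(Q'^\bullet) \to C^{\underline 0} \xrightarrow{\iota_*} H^n(C^\bullet_\intF) \to H^n(Q'^\bullet) \to 0,
\]
together with $H^j(Q'^\bullet) = 0$ for $j < n-1$ and $H^j(Q'^\bullet) \cong H^j(C^\bullet_\intF)$ for $j \geq n+1$. The main obstacle is to identify $\iota_*$, induced by the inclusion $C^{\underline 0}[-n] \hookrightarrow Q^\bullet$ composed with the quasi-isomorphism $Q^\bullet \simeq C^\bullet_\intF$, with the map $x \mapsto [d^{e_1 + \cdots + e_{n-1}, n} \circ \cdots \circ d^{\underline 0, 1}(x)]$ built into the augmentation of ${^+{C}}^\bullet_{\intF}$. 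I would verify this via the standard zig-zag in the double complex $Q^{\bullet, \bullet}$: starting from $x \in C^{\underline 0} = Q^{n, \underline 0}$, the Koszul-exactness that underlies the quasi-isomorphism $Q^\bullet \simeq C^\bullet_\intF$ allows one to correct $x$, at each step, by a Koszul preimage of an element obtained by applying one multicomplex differential $d^{\cdot, i}$, thereby descending by one in the Koszul degree. Iterating $n$ times lands in the $p = 0$ row at multidegree $\underline 1$, producing the element $d^{e_1+\cdots+e_{n-1},n} \circ \cdots \circ d^{\underline 0, 1}(x) \in C^{\underline 1}$. Since this is, up to sign, also the connecting map of the tautological short exact sequence $0 \to C^\bullet_\intF \to {^+{C}}^\bullet_{\intF} \to C^{\underline 0}[-(n-1)] \to 0$, a comparison of the two long exact sequences yields $H^\bullet(Q'^\bullet) \cong H^\bullet({^+{C}}^\bullet_{\intF})$, and the proposition follows.
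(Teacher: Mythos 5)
Your argument is sound and arrives at the same conclusion, but it takes a genuinely different route to identify the abutment. The paper also truncates $Q^{\bullet,\bb}$ (their $Q_-^{p,q}$ is your $Q'^{p,\bb}$) and runs the column filtration to get the stated $E_1$-page; to compute the abutment, however, the paper then runs the \emph{second} (row) spectral sequence on $Q_-^{\bullet,\bullet}$, observes that it is concentrated at $p=0$ and at the single spot $(n-1,0)$, and identifies the last nontrivial differential $'d_n^{n-1,0}\colon C^{\underline 0}\to H^n(C^\bullet_\intF)$ with the iterated map $d^{e_1+\cdots+e_{n-1},n}\circ\cdots\circ d^{\underline 0,1}$ using Weibel's explicit description of $d_n$. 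You instead package the truncation as the short exact sequence of complexes $0\to C^{\underline 0}[-n]\to Q^\bullet\to Q'^\bullet\to 0$, apply the long exact sequence together with $H^\bullet(Q^\bullet)\cong H^\bullet(C^\bullet_\intF)$, and then compare term-by-term with the long exact sequence of the tautological $0\to C^\bullet_\intF\to {^+{C}}^\bullet_\intF\to C^{\underline 0}[-(n-1)]\to 0$, reducing everything to the identification of $\iota_*$. What this buys you is a cleaner bookkeeping: one never needs the general recipe for $d_n$ of a spectral sequence, only a direct cohomology computation with a four-term exact sequence. But the genuine content --- identifying the map $C^{\underline 0}\to H^n(C^\bullet_\intF)$ with $\pm\,\psi_{1,\ldots,n}$ --- is exactly the same zig-zag in $Q^{\bullet,\bullet}$ that the paper carries out in full (constructing the chain $x_0,\ldots,x_{n-1}$ with $x_q\in K^{n-q-1}(1,\ldots,1;C^{e_1+\cdots+e_q})$ and tracking that the Koszul corrections successively expose $\alpha_q=d^{e_1+\cdots+e_{q-1},q}(\alpha_{q-1})$, using $d^{\cdot,j}\circ d^{\cdot,j}=0$ to kill the unwanted components). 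You only sketch this step; if you want the proof to be complete rather than a plan, you should write out that chain and the sign bookkeeping as the paper does, since it is the part of the argument that actually ties the abutment to the augmentation of ${^+{C}}^\bullet_\intF$.
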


\begin{proof} Consider the double complex $Q_-^{p,q}$ defined by $Q_-^{p,q}:=Q^{p,q}$ unless $p=n$ and $Q_-^{n,q}:=0$; let $H^q$ denote the $q$-th cohomology of its totalization. It gives rise to a spectral sequence
$$
E^{p,q}_1=\oplus_{i_1<\cdots <i_p}H^q(C^{\bullet}_{\Face^*_{i_1,\ldots ,i_p}})\Rightarrow H^{p+q}.
$$
with $p$ in the range $0\leq p<n$ as claimed. The other spectral sequence has second terms $'E^{n-1,0}_2\simeq C^{\underline{0}}$, $'E^{0,q}_2=H^q (C^{\bullet}_{\intF})$ for any $q$ and $'E^{p,q}_2=0$ otherwise, by Proposition \ref{mainspectral}. As $C^{q}_{\intF}=0$ for $q<n$, $H^q (C^{\bullet}_{\intF})=0$ for $q<n$ and $H^n (C^{\bullet}_{\intF})\subseteq C^{n}_{\intF}=C^{\underline{1}}$. It follows that:

\begin{itemize}
\item[(i)] $H^q =0$ for $q<n-1$,

\item[(ii)]  $H^{n-1}=\ker ('d^{n-1,0}_n : {'E^{n-1,0}_n} \ra {'E}^{0,n}_n )\simeq \ker  (C^{\underline{0}} \ra H^n (C^{\bullet}_{\intF}))$,

\item[(iii)] $H^n=\coker ('d^{n-1,0}_n : {'E^{n-1,0}_n} \ra {'E}^{0,n}_n )\simeq \coker  (C^{\underline{0}} \ra H^n (C^{\bullet}_{\intF}))$,

\item[(iv)] $H^q=H^{q}(C^{\bullet}_{\intF})$ for $q>n$.
\end{itemize}

To conclude the proof, we show that, after the identifications ${'E}^{n-1,0}_n \simeq {'E}^{n-1,0}_1 \simeq C^{\underline{0}}$ and ${'E}^{0,n}_n\simeq {'E}^{0,n}_2 \simeq H^n (C^{\bullet}_{\intF})$, 
one has $'d^{n-1,0}_n =\pm d^{e_{1}+\cdots +e_{n-1},n }\circ \;\cdots \;\circ \; d^{{e_1},2}\; \circ \; d^{\underline0,1}$. We follow the  construction of this spectral sequence in \cite[page 133]{Wei}. 

Write $d_K$ for the Koszul differential and $d_Q$ for the differential on the totalization of the $n$-multicomplex $Q^{\bb}$.  

The identification ${'E}^{n-1,0}_1 \simeq C^{\underline{0}}$ sends the class of an element $x :=\sum_{\vert I\vert =n-1}\alpha_I \wedge_{j\in I}e_j$  to $\sum_i \sum_{\vert I\vert =n-1}\alpha_I e_i\wedge(\wedge_{j\in I}e_j)=\alpha e_1\wedge\cdots\wedge e_n$. Modulo a border,  
$x$ is equal to $\alpha e_2\wedge \cdots \wedge e_n$. 
To determine the image of the class of $x$ by $'d^{n-1,0}_n $, set $\alpha_0:= \alpha$, $x_0:=\alpha_0 e_2\wedge \cdots \wedge e_n$, 
$$
\alpha_q :=d^{e_1+\cdots+e_{q-1},q}\circ\cdots\circ d^{\underline0,1}(\alpha)\in  C^{e_1+\cdots+e_{q}}\subseteq C^{q}
$$  
for $1\leq q\leq n$ and if $1\leq q< n$
$$
x_q:=\alpha_q e_{q+2}\wedge\cdots\wedge e_n \in K^{n-q-1}({{\underbrace{1,\ldots ,1}_{n-q\ {\rm times}}}};C^{e_1+\cdots+e_{q}}).
$$
Now $d_K (x_q)=\varepsilon_q \alpha_q e_{q+1}\wedge e_{q+2}\wedge\cdots\wedge e_n$, with $\varepsilon_q =\pm 1$, since $e_j \wedge e_{q+2}\wedge\cdots\wedge e_n = 0$ for $j\geq q+1$, unless $j=q+1$. 
On the other hand, since by definition $\alpha_q =d^{e_1+\cdots+e_{q-1},q}(\alpha_{q-1})$, it follows that $d_Q (x_{q-1})=\varepsilon'_q\alpha_q e_{q+1}\wedge e_{q+2}\wedge\cdots\wedge e_n$, with $\varepsilon'_q =\pm 1$. 

Hence, setting  $\epsilon_j :=\prod_{i=1}^j-\varepsilon_i\varepsilon'_i$, the element 
$$x':=x_0+\epsilon_1 x_1\oplus \cdots \oplus \epsilon_{n-1} x_{n-1}\in \bigoplus_{q=0}^{n-1}K^{n-q-1}({{\underbrace{1,\ldots ,1}_{n-q\ {\rm times}}}};C^{e_1+\cdots+e_{q}})\subseteq \bigoplus_{q=0}^{n-1}C^{n-q-1,q}
$$ is in $A^{n}_{0}$ as in the construction of \cite[page 133]{Wei}. The conclusion follows, since then, by definition, $'d^{n-1,0}_n (x)=  {'d}^{n-1,0}_n (x')$ is the class of $\pm\alpha_n=\pm d_Q (x_{n-1})\in C^{e_1+\cdots+e_{n}}\subseteq  H^n (C^{\bullet}_{\intF})$.





\end{proof}

\begin{prop}\label{ss3}Let $C^{\underline{\bullet}}$ be a $n$-multicomplex satisfying $C^\bq=0$ for $\bq\not\in \NN^n=\oplus_{i=1}^{n}\NN e_i$. Then there exists a convergent spectral sequence:
 $$E_1^{p,q}=\oplus_{i_1<\ldots<i_p}H^{p+q}(C^{\bullet}_{\intF_{i_1,\ldots ,i_p} })\Rightarrow H^{p+q}(C^{\bullet}_{\NN^n\setminus \{ 0\} }).$$
\end{prop}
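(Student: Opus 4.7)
The plan is to exhibit a bounded decreasing filtration on $C^{\bullet}_{\Pface}$ whose associated graded piece at level $p$ is the direct sum $\bigoplus_{|S|=p} C^{\bullet}_{\intF_S}$, and to read off the desired spectral sequence from the standard one attached to a filtered complex. The structural input is the disjoint decomposition
$$
\Pface=\NN^n\setminus\{\underline 0\}=\bigsqcup_{\emptyset\neq S\subseteq\{1,\ldots,n\}}\intF_S,
$$
together with the remark that for $\bq\in\intF_S$ one has $\bq+e_i\in\intF_S$ if $i\in S$ and $\bq+e_i\in\intF_{S\cup\{i\}}$ if $i\notin S$. Consequently the support-size function $\bq\mapsto|S(\bq)|:=\#\{j:q_j>0\}$ is nondecreasing along every differential $d^{\bq,i}$ of the multicomplex.

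For $p\geq 1$, set
$$
F^p:=\bigoplus_{\substack{\emptyset\neq S\subseteq\{1,\ldots,n\}\\ |S|\geq p}} C^{\bb}_{\intF_S}\ \subseteq\ C^{\bullet}_{\Pface}.
$$
By the previous remark, each $F^p$ is a subcomplex, and one obtains a bounded decreasing filtration
$$
C^{\bullet}_{\Pface}=F^1\supseteq F^2\supseteq\cdots\supseteq F^n\supseteq F^{n+1}=0.
$$
Boundedness ensures strong convergence of the associated spectral sequence to the cohomology of the filtered complex, i.e.\ to $H^{p+q}(C^{\bullet}_{\Pface})$.

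To identify the $E_1$-page, note that in $F^p/F^{p+1}$ every differential strictly increasing $|S|$ has its target killed, leaving only the ``interior'' differentials within each $\intF_S$. Hence $F^p/F^{p+1}\cong\bigoplus_{|S|=p}C^{\bullet}_{\intF_S}$ as complexes, and taking cohomology yields
$$
E_1^{p,q}=H^{p+q}(F^p/F^{p+1})=\bigoplus_{i_1<\cdots<i_p} H^{p+q}(C^{\bullet}_{\intF_{i_1,\ldots,i_p}}),
$$
which is precisely the $E_1$-page claimed. I do not anticipate any serious obstacle: the argument reduces to a single bounded filtration together with the standard spectral sequence machinery. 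The only care needed lies in matching the indexing conventions---since $C^{\bullet}_{\intF_S}$ is concentrated in total degrees $\geq |S|=p$, the cohomological shift $p+q$ on both sides of the statement comes out correctly.
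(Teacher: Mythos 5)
Your proposal is correct and coincides essentially with the paper's proof: you use the same filtration (your $F^p$, indexed by support size $|S|\geq p$, is exactly the paper's $F_p=\oplus_{\bq\in X_p}C^\bq$ where $X_p$ consists of $\bq$ with at least $p$ nonzero coordinates), the same identification of the graded pieces $F^p/F^{p+1}\simeq\oplus_{|S|=p}C^\bullet_{\intF_S}$, and the same appeal to the convergent spectral sequence of a bounded filtration.
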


\begin{proof}
Given $p\geq 1$, define $$X_p:=\{(q_1,...,q_n)\in\NN^n\ \vert\ \mbox{at most} \ n-p \ \mbox{of the} \ q_j\mbox{'s are zero}\}.$$ 

The family of subcomplexes  $F^\bullet_p$ with $F_p:=\displaystyle\oplus_{\bq\in X_p}C^{\bq}$ is a limited descending filtration of $F^\bullet_1 =C^{\bullet}_{\NN^n\setminus \{ 0\}}$ and therefore yields a spectral sequence converging to $H^{p+q}(C^{\bullet}_{\NN^n\setminus \{ 0\} })$. As for $p\geq 1$,
$F_p/F_{p+1}\simeq\oplus_{i_1<\ldots<i_p}C^{\bullet}_{\intF_{i_1,\ldots ,i_p} }$ (exactly $p$ of the $q_i$'s are not zero), $$E_1^{p,q}=H^{p+q}(F_p/F_{p+1})\simeq\bigoplus_{i_1<...<i_p}H^{p+q}(C^{\bullet}_{\intF_{i_1,\ldots ,i_p} }).$$
\end{proof}

To derive the fourth spectral sequence, let $T^\bb (C^{\underline 0})$ be the trivial hypercube commuting multiple complex on $C^{\underline 0}$ : $T^{\underline q}=C^{\underline 0}$ if $\bq\in \{0,1\} ^n$  and $0$ otherwise, and differentials are the identity if source and target are in degrees that belong to $\{0,1\} ^n$ and $0$ else. If $C^\bb$ is a commuting multiple $\NN^n$ complex, we define a map from $T^\bb (C^{\underline 0})$ to $C^\bb$ by 
$$
\psi_{i_1,\ldots ,i_p}\ :\ x\in T^{e_{i_1}+\cdots +e_{i_p}}(C^{\underline 0})=C^{\underline 0}\mapsto d^{e_{i_1}+\cdots +e_{i_{p-1}},i_{p} }\circ \;\cdots \;\circ \; d^{e_{i_{1}},i_{2}}\; \circ \; d^{0,i_1}(x)\in C^{e_{i_1}+\cdots +e_{i_p}}
$$
and notice that it provides a commuting $\ZZ^{n+1}$-multicomplex ${^\square{C}}^{\bullet, \bb}$ sitting in degrees that belong to $\{-1,0\}\times \NN^n$, with the hypercube sitting in degrees $\{-1\}\times \ZZ^{n}$ and ${^\square{C}}^{0,\bb}=C^\bb$. Denote by $C^\bullet$ the totalization of $C^\bb$.

\begin{prop}Let $C^{\underline{\bullet}}$ be a $n$-multicomplex satisfying $C^\bq=0$ for $\bq\not\in \NN^n=\oplus_{i=1}^{n}\NN e_i$. Then there exists a convergent spectral sequence:
 $$E_1^{p,q}=\oplus_{i_1<\ldots<i_p}H^{p+q}({^{+}C}^{\bullet}_{\intF_{i_1,\ldots ,i_p} })\Rightarrow H^{p+q}(C^{\bullet}).$$
\end{prop}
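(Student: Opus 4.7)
The plan is to extend the filtration argument of Proposition \ref{ss3} from $C^\bullet$ to the totalization $S^\bullet$ of the augmented $(n+1)$-multicomplex ${^\square{C}}^{\bullet, \bb}$ constructed just above.

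First I would verify that $S^\bullet$ is quasi-isomorphic to $C^\bullet$. Since the first index of ${^\square{C}}^{\bullet,\bb}$ takes only the two values $-1$ and $0$, the complex $S^\bullet$ is literally the mapping cone of the map of complexes $\psi : T^\bullet(C^{\underline 0}) \to C^\bullet$. The totalization $T^\bullet(C^{\underline 0})$ of the hypercube is the Koszul complex $K^\bullet(1,\ldots,1; C^{\underline 0})$, exact for $n \geq 1$ by \cite[1.6.4]{BH}, so $S^\bullet$ is quasi-isomorphic to $C^\bullet$.

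Next I would define the decreasing filtration
$$\tilde F_p := \bigoplus_{(\epsilon, \bq) \in Z_p} {^\square{C}}^{\epsilon, \bq}, \quad Z_p := \{(\epsilon, \bq) \in \{-1,0\} \times \NN^n : \text{at most } n-p \text{ of the } q_j\text{'s are zero}\},$$
which is a bounded filtration of $S^\bullet$ by subcomplexes, with $\tilde F_0 = S^\bullet$ and $\tilde F_{n+1} = 0$. The subcomplex property follows as in the proof of Proposition \ref{ss3}, since all internal differentials $d^{\bq, i}$ either preserve or decrease the number of zero coordinates of $\bq$, and each $\psi$-map leaves $\bq$ unchanged. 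The key step is to identify the associated quotients: for $p = 0$, $\tilde F_0/\tilde F_1$ reduces to the acyclic two-term complex $C^{\underline 0}\to C^{\underline 0}$ with the empty composition $\psi_\emptyset = \mathrm{id}$ as differential; and for $p \geq 1$, splitting the $(\epsilon, \bq)$ with exactly $n-p$ zeros according to the set $\{i_1 < \ldots < i_p\}$ of nonzero coordinates yields, for each such subset, $C^\bullet_{\intF_{i_1,\ldots,i_p}}$ together with one extra $C^{\underline 0}$ at total degree $p-1$ coming from the point $(-1, e_{i_1}+\cdots+e_{i_p})$ of the hypercube, attached via $\psi_{i_1,\ldots,i_p}$, which by construction equals the composite $d^{e_{i_1}+\cdots+e_{i_{p-1}},i_p} \circ \cdots \circ d^{\underline 0, i_1}$ defining the augmentation in ${^+C}^\bullet_{\intF_{i_1,\ldots,i_p}}$. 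Thus $\tilde F_p/\tilde F_{p+1} \simeq \bigoplus_{i_1<\ldots<i_p} {^+C}^\bullet_{\intF_{i_1,\ldots,i_p}}$, and the $E_1$-page of the spectral sequence of this filtration takes the desired form, converging to $H^{p+q}(S^\bullet) \simeq H^{p+q}(C^\bullet)$.

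The main obstacle is this last identification, in particular verifying that the connecting differential in the quotient agrees, up to signs from the Koszul and totalization conventions, with the explicit composite appearing in the definition of ${^+C}^\bullet_{\intF_{i_1,\ldots,i_p}}$; all other items (the subcomplex property, the acyclicity of $\tilde F_0/\tilde F_1$, and convergence) reduce to direct checks and the standard spectral sequence of a bounded filtered complex.
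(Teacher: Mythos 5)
Your proof is correct and follows essentially the same route as the paper's own argument: you quasi-isomorph the totalization of $^\square C^{\bullet,\bb}$ with $C^\bullet$ using the exactness of the hypercube (your mapping-cone phrasing is an explicit rewording of the paper's observation that $\mathrm{Tot}(T^\bb(C^{\underline 0}))$ has trivial total cohomology), then filter by the number of vanishing coordinates of $\bq$, exactly as the paper does with $X'_p = \ZZ\times X_p$, and identify the graded pieces with $\bigoplus_{i_1<\cdots<i_p}{^+C}^\bullet_{\intF_{i_1,\ldots,i_p}}$ for $p\geq 1$ and with the acyclic two-term complex $C^{\underline 0}\to C^{\underline 0}$ for $p=0$. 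The one thing you omit that the paper makes explicit is the reduction (via $\sigma$) to the commutative case, which keeps the sign bookkeeping for $\psi_{i_1,\ldots,i_p}$ and the Koszul differentials clean, but this is exactly the kind of routine sign-check you already flag at the end.
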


\begin{proof}
First we may, and will, assume that $C^{\underline{\bullet}}$ is a commutative $n$-multicomplex (applying $\sigma$ as defined in Section \ref{multcomp} if it is anticommutative to go to this case). 
As ${\rm Tot}(T^\bb (C^{\underline 0}))$ has trivial total cohomology,  the complex $T^\bullet:={\rm Tot}({^\square{C}}^{\bullet, \bb})$ satisfies $H^{p+q}(C^{\bullet})\simeq H^{p+q}(T^\bullet)$.

For $p\geq 0$, define $X'_p:=\ZZ \times X_p$, with $X_p$ as in the proof of Proposition  \ref{ss3}.
The family of subcomplexes  $F^\bullet_p$ with $F_p:=\displaystyle\oplus_{\bq'\in X'_p}{^\square{C}}^{\bq '}$ is a limited descending filtration of $F^\bullet_0 =T^\bullet$ and therefore yields a spectral sequence converging to $H^{p+q}(T^\bullet)$. For $p\geq 1$,
$F_p/F_{p+1}$ is the direct sum of the complexes 
$$
\xymatrix{
0\ar[r]&C^{\underline 0}\ar^(.4){\psi_{i_1,\ldots ,i_p}}[rr]&&{C}^{p}_{\intF_{i_1,\ldots ,i_p} }\ar[r]&{C}^{p+1}_{\intF_{i_1,\ldots ,i_p} }\ar[r]&\cdots\\},
$$ hence $$E_1^{p,q}=H^{p+q}(F_p/F_{p+1})\simeq\bigoplus_{i_1<...<i_p}H^{p+q}({^{+}C}^{\bullet}_{\intF_{i_1,\ldots ,i_p} }).$$
For $p=0$, $F_p/F_{p+1}$ is $\xymatrix{0\ar[r]& C^{\underline 0} \ar^{\pm id}[r]& C^{\underline 0}\ar[r]& 0\\}$, since $\psi_{\underline 0}=id$, with $C^{\underline 0}$ sitting in degrees $-1$ and $0$. Hence $E_2^{0,q}=0$ for any $q$ and the conclusion follows.
\end{proof}

\section{Cohomology in multiple \v{C}ech complexes}\label{cohomologysection}


Let $R$ be a commutative unitary ring. For an $R$-module $M$ and $\mathbf{x}:=(x_1,\ldots ,x_m)$ a sequence of elements in $R$, $\Cc^\bullet_\mathbf{x} (M)$ denotes the \v{C}ech complex 
$$
\xymatrix{0\ar[r]&M\ar[r]&\oplus_{i}M_{x_i}\ar[r]&\cdots\ar[r]&M_{x_1\cdots x_m}\ar[r]&0}
$$
and $\check\Cc^\bullet_\mathbf{x} (M)$  the complex obtained by replacing $\Cc^0_\mathbf{x} (M)$ by $0$ in $\Cc^\bullet_\mathbf{x} (M)$ (truncation),
$$
\xymatrix{0\ar[r]&\oplus_{i}M_{x_i}\ar[r]&\cdots\ar[r]&M_{x_1\cdots x_m}\ar[r]&0.}
$$

For $n$ sequences $\mathbf{a}_i:=(a_{i,1},\ldots ,a_{i,m_i})$ of elements in $R$, we write $\mathbf{a}_1\cdots \mathbf{a}_n$ for the sequence of the $m_1\cdots m_n$ elements $a_{1,i_1}\cdots a_{n,i_n}$ lexicographically ordered. The ideal generated by the elements in $\mathbf{a}_i$ is written $\mathfrak{a}_i$.

The complex $\check\Cc^\bullet_{\mathbf{a}_1,\ldots ,\mathbf{a}_n} (M):=\check\Cc^\bullet_{\mathbf{a}_1} (R)\otimes_R\cdots\otimes_R \check\Cc^\bullet_{\mathbf{a}_n} (R)\otimes_R M $ has as first possibly non-trivial module $\oplus M_{a_{1,i_1}\cdots a_{n,i_n}}=\check\Cc^n_{\mathbf{a}_1\cdots \mathbf{a}_n} (M)$  sitting in cohomological degree $n$  and admits a natural augmentation from $M$ to this first module. The augmented complex:
$$
\xymatrix{0\ar[r]&M\ar[r]& \check\Cc^n_{\mathbf{a}_1,\ldots ,\mathbf{a}_n} (M)\ar^{\phi}[r]&\check\Cc^{n+1}_{\mathbf{a}_1,\ldots ,\mathbf{a}_n} (M)\ar[r] &\cdots}
$$
 is denoted by $\Cc^\bullet_{\mathbf{a}_1,\ldots ,\mathbf{a}_n} (M)$. 

The sequences $0\ra\Cc^{n-1}_{\mathbf{a}_1,\ldots ,\mathbf{a}_n} (M)\ra\Cc^n_{\mathbf{a}_1,\ldots ,\mathbf{a}_n} (M)$ and $0\ra\Cc^0_{\mathbf{a}_1\cdots \mathbf{a}_n} (M)\ra\Cc^1_{\mathbf{a}_1\cdots \mathbf{a}_n} (M)$ are identical, the modules $H^{i+n-1}_{\mathbf{a}_1,\ldots ,\mathbf{a}_n}(M):=H^{i+n-1}(\Cc^\bullet_{\mathbf{a}_1,\ldots ,\mathbf{a}_n} (M))$ and $H^i_{\mathfrak{a}_1\cdots \mathfrak{a}_n}(M):=H^i(\Cc^\bullet_{\mathbf{a}_1\cdots \mathbf{a}_n} (M))$ thus coincide for $i=0$ and are both equal to $H^0_{\mathfrak{a}_1\cdots \mathfrak{a}_n}(M)$.

We write $D_{\mathbf{a}_1,\ldots ,\mathbf{a}_n}(M):=H^n(\check\Cc^\bullet_{\mathbf{a}_1,\ldots ,\mathbf{a}_n} (M))=\ker (\phi )$ and  $D_{\mathfrak{a}_1\cdots \mathfrak{a}_n}(M):=H^1(\check\Cc^\bullet_{\mathbf{a}_1\cdots \mathbf{a}_n} (M))=\ker (\psi )$, with 
 $
\xymatrix{\check\Cc^1_{\mathbf{a}_1\cdots \mathbf{a}_n} (M)\ar^{\psi}[r]&\check\Cc^2_{\mathbf{a}_1\cdots \mathbf{a}_n} (M)}
$
and remark that $D_{\mathfrak{a}_1\cdots \mathfrak{a}_n}(M)\subseteq D_{\mathbf{a}_1,\ldots ,\mathbf{a}_n}(M)$.

The exact sequence of complexes
$$\xymatrix{0\ar[r] & \check\Cc^\bullet_{\mathbf{a}_1,\ldots,\mathbf{a}_n}(M)\ar[r] & \Cc^\bullet_{\mathbf{a}_1,\ldots,\mathbf{a}_n}(M)\ar[r] & M\ar[r] & 0}$$
where $M$ also denotes the complex centered in the $R$-module $M$ at degree $n-1$ gives rise to an exact sequence
\begin{align}
\xymatrix{0\ar[r] & H^{n-1}_{\mathbf{a}_1,\ldots,\mathbf{a}_n}(M)\ar[r] & M\ar[r] & D_{\mathbf{a}_1,\ldots,\mathbf{a}_n}(M)\ar[r] & H^n_{\mathbf{a}_1,\ldots,\mathbf{a}_n}(M)\ar[r] & 0}\label{eq:fundamentalexactsequence}\end{align}
and equalities
$$H^i(\check\Cc^\bullet_{\mathbf{a}_1,\ldots,\mathbf{a}_n}(M))= H^i_{\mathbf{a}_1,\ldots,\mathbf{a}_n}(M)$$ for all $i>n+1$.

\begin{lem}\label{Cechvan}
Assume that $M=H^0_{\mathfrak{a}_1\cdots \mathfrak{a}_n}(M)$, then $\Cc^i_{\mathbf{a}_1\cdots \mathbf{a}_n}(M)=\Cc^{i+n-1}_{\mathbf{a}_1,\ldots ,\mathbf{a}_n}(M)=0$ for any $i>0$.
\end{lem}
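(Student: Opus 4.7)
The hypothesis $M=H^0_{\mathfrak{a}_1\cdots\mathfrak{a}_n}(M)$ says every element of $M$ is annihilated by some power of the ideal $\mathfrak{a}_1\cdots\mathfrak{a}_n$. Since this ideal is generated by the elements $a_{1,i_1}\cdots a_{n,i_n}$, the plan is to first reduce everything to the vanishing of the single localizations $M_{a_{1,i_1}\cdots a_{n,i_n}}$ and then observe that all relevant components of both complexes are further localizations of these.

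The first step is the key point. Fix indices $i_1,\ldots,i_n$ and set $f:=a_{1,i_1}\cdots a_{n,i_n}\in\mathfrak{a}_1\cdots\mathfrak{a}_n$. For any $m\in M$ there is an integer $k$ such that $(\mathfrak{a}_1\cdots\mathfrak{a}_n)^k m=0$, hence in particular $f^km=0$. Thus $m/1=0$ in $M_f$, and since $m$ was arbitrary, $M_f=0$.

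The second step handles $\Cc^i_{\mathbf{a}_1\cdots\mathbf{a}_n}(M)$ for $i>0$. By construction this module is a direct sum of localizations of $M$ at products of $i$ of the elements $a_{1,j_1}\cdots a_{n,j_n}$. Each such product is divisible in $R$ by at least one element $f$ of the form $a_{1,i_1}\cdots a_{n,i_n}$, so the corresponding localization is a further localization of $M_f=0$, and thus vanishes.

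The third step handles $\Cc^{i+n-1}_{\mathbf{a}_1,\ldots,\mathbf{a}_n}(M)$ for $i>0$, i.e.\ cohomological degrees $\geq n$. In this range the module in degree $p\geq n$ is a direct sum of tensor products
\[
\check\Cc^{p_1}_{\mathbf{a}_1}(R)\otimes_R\cdots\otimes_R\check\Cc^{p_n}_{\mathbf{a}_n}(R)\otimes_R M
\]
with each $p_j\geq 1$; each summand is a localization of $M$ at a product of elements that contains at least one factor $a_{j,k_j}$ from each sequence $\mathbf{a}_j$. Hence this product is divisible by some $f=a_{1,k_1}\cdots a_{n,k_n}$, so again the summand is a further localization of $M_f=0$ and therefore vanishes.

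No step looks genuinely delicate; the only thing to be careful about is the bookkeeping in the third step, namely verifying that every component of the tensor-product complex in degree $\geq n$ really is a localization at a product containing at least one $a_{j,\cdot}$ from each $\mathbf{a}_j$. This is immediate from the fact that $\check\Cc^\bullet_{\mathbf{a}_j}(R)$ is non-zero only in degrees $\geq 1$, so all $n$ factors contribute non-trivially.
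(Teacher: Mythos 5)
Your proof is correct and follows essentially the same route as the paper's: observe that each nonzero summand of $\Cc^i_{\mathbf{a}_1\cdots \mathbf{a}_n}(M)$ or $\Cc^{i+n-1}_{\mathbf{a}_1,\ldots ,\mathbf{a}_n}(M)$ (for $i>0$) is a localization $M_w$ with $w$ a multiple of some $a_{1,i_1}\cdots a_{n,i_n}$, and such a localization is zero under the hypothesis $M=H^0_{\mathfrak{a}_1\cdots\mathfrak{a}_n}(M)$. Your write-up just spells out the bookkeeping that the paper leaves implicit.
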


\begin{proof}
For $i>0$, summands of $\Cc^i_{\mathbf{a}_1\cdots \mathbf{a}_n}(M)$ or $\Cc^{i+n-1}_{\mathbf{a}_1,\ldots ,\mathbf{a}_n}(M)$ are localizations $M_w$, where $w$ is a multiple of an element of the form $a_{1,i_1}\cdots a_{n,i_n}$, for some $n$-tuple $(i_1,\ldots ,i_n)$.
\end{proof}

Recall that, for any sequence $\mathbf{x}=(x_1,\ldots ,x_m)$ of elements in $R$, $q\in\NN$, $j\in\{1,\ldots ,m\}$  and $\alpha \in H^q_\mathbf{x} (M)$,  $x_j^t \alpha =0$ for some $t$ (see, for instance, the identification in \cite[Theorem 2.3]{H}). This extends to sums of localizations $\Cc^p_\mathbf{a} H^q_\mathbf{x} (M)$, and hence to the subquotients  $H^p_\mathbf{a} (H^q_\mathbf{x} (M))$ or the submodules $D_\mathbf{a} (H^q_\mathbf{x}(M))$, for any sequence $\mathbf{a}$ of elements in $R$. Hence $H^i_{\mathfrak{a}_1\cdots\mathfrak{a}_n}(M)=H^0_{\mathfrak{a}_1\cdots \mathfrak{a}_n}(H^i_{\mathfrak{a}_1\cdots\mathfrak{a}_n}(M))$ for any $i\geq0$, but also:

\begin{lem}\label{suppDab}
For any $R$-module $M$ and any $i\geq0$, $H^i_{\mathbf{a}_1,\ldots,\mathbf{a}_n}(M)=H^0_{\mathfrak{a}_1\cdots \mathfrak{a}_n}(H^i_{\mathbf{a}_1,\ldots,\mathbf{a}_n}(M))$.
\end{lem}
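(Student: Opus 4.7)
The plan is to prove, by induction on $n$, that $H^i_{\mathbf{a}_1,\ldots,\mathbf{a}_n}(M)$ is annihilated by a power of $\mathfrak{a}_1\cdots\mathfrak{a}_n$ for every $i\geq 0$. The base case $n=1$ is the classical fact that $H^i_{\mathfrak{a}_1}(M)$ is $\mathfrak{a}_1$-torsion. For the inductive step, I treat three ranges of $i$ separately: $i\leq n-1$ is immediate, $i=n$ demands a direct cocycle argument, and $i\geq n+1$ follows from a spectral sequence.

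For $i<n-1$ one has $H^i_{\mathbf{a}_1,\ldots,\mathbf{a}_n}(M)=0$ since $\Cc^\bullet_{\mathbf{a}_1,\ldots,\mathbf{a}_n}(M)$ starts in degree $n-1$; and for $i=n-1$ the identification $H^{n-1}_{\mathbf{a}_1,\ldots,\mathbf{a}_n}(M)=H^0_{\mathfrak{a}_1\cdots\mathfrak{a}_n}(M)$ noted just before the lemma is itself the desired torsion statement. For $i=n$, the fundamental sequence \eqref{eq:fundamentalexactsequence} presents $H^n$ as $D_{\mathbf{a}_1,\ldots,\mathbf{a}_n}(M)$ modulo the diagonal image of $M$. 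A class is represented by $\tilde\alpha=(\tilde\alpha_{j_1,\ldots,j_n})\in D_{\mathbf{a}_1,\ldots,\mathbf{a}_n}(M)\subseteq\bigoplus_{j_1,\ldots,j_n}M_{a_{1,j_1}\cdots a_{n,j_n}}$; writing a chosen component $\tilde\alpha_{k_1,\ldots,k_n}=m_0/g^{s_0}$ with $g=a_{1,k_1}\cdots a_{n,k_n}$ and exploiting the \v{C}ech compatibility (each $\tilde\alpha_{j_1,\ldots,j_n}$ agrees with $\tilde\alpha_{k_1,\ldots,k_n}$ in the joint localization), one can pick $t$ large enough, uniformly in the finitely many multi-indices, so that $g^t\tilde\alpha$ is the diagonal image of $g^{t-s_0}m_0\in M$; hence $g^t$ kills the class.

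For $i\geq n+1$ I use the spectral sequence of the double complex $D^{p,q}=\check\Cc^p_{\mathbf{a}_n}(R)\otimes\check\Cc^q_{\mathbf{a}_1,\ldots,\mathbf{a}_{n-1}}(M)$, whose totalization is $\check\Cc^\bullet_{\mathbf{a}_1,\ldots,\mathbf{a}_n}(M)$. Taking cohomology in the $q$-direction first, flatness of localizations gives $E_1^{p,q}=\check\Cc^p_{\mathbf{a}_n}(R)\otimes H^q(\check\Cc^\bullet_{\mathbf{a}_1,\ldots,\mathbf{a}_{n-1}}(M))$, so $E_2^{p,q}$ equals $H^p_{\mathfrak{a}_n}$ (for $p\geq 2$) or $D_{\mathbf{a}_n}$ (for $p=1$) applied to $D_{\mathbf{a}_1,\ldots,\mathbf{a}_{n-1}}(M)$ when $q=n-1$ and to $H^q_{\mathbf{a}_1,\ldots,\mathbf{a}_{n-1}}(M)$ when $q\geq n$. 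In the region $p+q\geq n+1$ each such term is $\mathfrak{a}_1\cdots\mathfrak{a}_n$-torsion: if $p\geq 2$ the outer $H^p_{\mathfrak{a}_n}$ already lives in $\mathfrak{a}_n$-torsion modules, and if $p=1$ then necessarily $q\geq n$, so the inner module is $\mathfrak{a}_1\cdots\mathfrak{a}_{n-1}$-torsion by the inductive hypothesis and the remark preceding the lemma guarantees that $D_{\mathbf{a}_n}$ of a torsion module is torsion. Since torsion is preserved by subquotients and extensions, $H^i_{\mathbf{a}_1,\ldots,\mathbf{a}_n}(M)$ is $\mathfrak{a}_1\cdots\mathfrak{a}_n$-torsion for all $i\geq n+1$.

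The main obstacle is the corner term $E_2^{1,n-1}=D_{\mathbf{a}_n}(D_{\mathbf{a}_1,\ldots,\mathbf{a}_{n-1}}(M))$: since $D_{\mathbf{a}_1,\ldots,\mathbf{a}_{n-1}}(M)$ sits between $M$ and a torsion quotient in the fundamental sequence and need not itself be torsion, this $E_2$-term is not manifestly torsion, and it contributes precisely in total degree $n$. This is exactly why the spectral sequence cannot conclude for $i=n$, forcing the separate direct cocycle-and-compatibility argument at that degree.
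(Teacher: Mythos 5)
Your proof is correct and follows the same skeleton as the paper's: induct on $n$, dispose of $i\leq n-1$ by inspection, treat $i\geq n+1$ via a spectral sequence of the tensor-factored \v{C}ech double complex (the paper uses the same $D^{p,q}=\check\Cc^p_{\mathbf{a}_1,\ldots,\mathbf{a}_{n-1}}(R)\otimes\check\Cc^q_{\mathbf{a}_n}(R)\otimes M$, just with the two gradings swapped relative to yours), and handle $i=n$ separately. The genuine divergence is at $i=n$. The paper factors $D_{\mathbf{a}_1,\ldots,\mathbf{a}_n}(M)=D_{\mathbf{a}_1,\ldots,\mathbf{a}_{n-1}}(D_{\mathfrak{a}_n}(M))$, applies the inductive hypothesis at level $n-1$ to push a class into $D_{\mathfrak{a}_n}(M)$ after multiplying by a power of $a_{1,i_1}\cdots a_{n-1,i_{n-1}}$, and finishes with the $n=1$ case. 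You instead run a direct cocycle computation inside $D_{\mathbf{a}_1,\ldots,\mathbf{a}_n}(M)\subseteq\bigoplus_{j_1,\ldots,j_n}M_{a_{1,j_1}\cdots a_{n,j_n}}$, moving the representative onto the diagonal by a power of one generator $g=a_{1,k_1}\cdots a_{n,k_n}$; this avoids invoking the inductive hypothesis at the $i=n$ step, a small structural simplification. One thing worth spelling out in your argument: the cocycle condition for $\check\Cc^\bullet_{\mathbf{a}_1,\ldots,\mathbf{a}_n}$ identifies only components whose multi-indices differ in a \emph{single} coordinate (in the localization at $a_{1,j_1}\cdots a_{l,j_l}a_{l,j_l'}\cdots a_{n,j_n}$); the agreement of $\tilde\alpha_{j_1,\ldots,j_n}$ with $\tilde\alpha_{k_1,\ldots,k_n}$ in the common localization is obtained by chaining $n$ such one-step equalities, and only then does the uniform $t$ exist because there are finitely many summands. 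Also, your opening phrase ``annihilated by a power of $\mathfrak{a}_1\cdots\mathfrak{a}_n$'' is literally stronger than $H^0_{\mathfrak{a}_1\cdots\mathfrak{a}_n}(N)=N$ (element-wise torsion), which is what the lemma states and what your element-wise argument actually proves — a phrasing slip, not a gap. Your closing remark correctly isolates why the spectral sequence alone cannot settle $i=n$: the corner $E_2^{1,n-1}=D_{\mathbf{a}_n}(D_{\mathbf{a}_1,\ldots,\mathbf{a}_{n-1}}(M))$ sits in total degree $n$ and is not manifestly torsion, which is precisely the reason the paper also treats $i=n$ by a separate argument.
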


\begin{proof}





The case $n=1$ is the remark before the statement and we induct on $n$. 

The result is clear for $i\leq n-1$ since $H^{n-1}_{\mathbf{a}_1,\ldots,\mathbf{a}_n}(M)=H^0_{\mathfrak{a}_1\cdots \mathfrak{a}_n}(M)$ and $H^{i}_{\mathbf{a}_1,\ldots,\mathbf{a}_n}(M)
=0$ for $i<n-1$.

For $i=n$, by recursion hypothesis and the exact sequence (\ref{eq:fundamentalexactsequence}), given $a_{j,i_j}\in \mathbf{a}_j$ for $1\leq j\leq n$ and $x\in D_{\mathbf{a}_1,\ldots,\mathbf{a}_n}(M)=D_{\mathbf{a}_1,\ldots,\mathbf{a}_{n-1}}(D_{\mathfrak{a}_n}(M))$, there exists $N$ such that $(a_{1,i_1}\cdots a_{n-1,i_{n-1}})^Nx\in D_{\mathfrak{a}_n}(M)$. Hence $(a_{1,i_1}\cdots a_{n,i_n})^{N'}x\in M$ some $N'\geq N$. In other words, $(a_{1,i_1}\cdots a_{n,i_n})^{N'}x=0$ in $H^n_{\mathbf{a}_1,\ldots,\mathbf{a}_n}(M)$. 

For $i\geq n+1$, notice that if $E_t^{\bullet ,\bullet}\Rightarrow H^\bullet$ is a spectral sequence such that $H^0_{\mathfrak{a}_1\cdots \mathfrak{a}_n}(E_t^{p,q})=E_t^{p,q}$, for some $t$ and all $p,q$ with $p+q\geq s$,  then $H^0_{\mathfrak{a}_1\cdots \mathfrak{a}_n}(H^i)=H^i$ for all $i\geq s$. 
 The double complex with $D^{p,q}:=\check\Cc^p_{\mathbf{a}_1,\ldots,\mathbf{a}_{n-1}}(R)\otimes_R\check\Cc^q_{\mathbf{a}_n}(R)\otimes_R M$ gives rise to two spectral sequences that abuts to the homology of $\check\Cc^\bullet_{\mathbf{a}_1,\ldots,\mathbf{a}_n}(M)$.
Taking $t:=n+1$ and $s:=n+2$ in any of these two spectral sequences, it then follows by recursion that 
$H^i_{\mathbf{a}_1,\ldots,\mathbf{a}_n}(M)=H^0_{\mathfrak{a}_1\cdots \mathfrak{a}_n}(H^i_{\mathbf{a}_1,\ldots,\mathbf{a}_n}(M))$ for all $i\geq n+1$ according to what was noticed just above -- also recall that if $\mathfrak a\subseteq \mathfrak b$, then for any module $N$, $H^0_{\mathfrak b}(N)=N$ implies $H^0_{\mathfrak a}(N)=N$.

\end{proof}

\begin{thm}\label{quasiisomorphism}
For any $R$-module $M$ and any $i$,
$$
H^{i+n-1}_{\mathbf{a}_1,\ldots ,\mathbf{a}_n}(M)\simeq H^i_{\mathfrak{a}_1\cdots \mathfrak{a}_n}(M)
$$
and $D_{\mathbf{a}_1,\ldots ,\mathbf{a}_n}(M)\simeq D_{\mathfrak{a}_1\cdots \mathfrak{a}_n}(M)$.
\end{thm}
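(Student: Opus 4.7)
The plan proceeds in three stages: (I) the $D$-isomorphism and the case $i = 0$, (II) the case $i = 1$ via the fundamental exact sequence (\ref{eq:fundamentalexactsequence}), and (III) dimension shifting and induction for $i \geq 2$. The $i = 0$ case is already in hand from the remark before Lemma \ref{Cechvan}: both $H^{n-1}_{\mathbf{a}_1,\ldots,\mathbf{a}_n}(M)$ and $H^0_{\mathfrak{a}_1\cdots\mathfrak{a}_n}(M)$ equal the kernel of the common augmentation $M \to \check\Cc^n_{\mathbf{a}_1,\ldots,\mathbf{a}_n}(M) = \check\Cc^1_{\mathbf{a}_1\cdots\mathbf{a}_n}(M)$. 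For the $D$-isomorphism, since the paper already records $D_{\mathfrak{a}_1\cdots\mathfrak{a}_n}(M) \subseteq D_{\mathbf{a}_1,\ldots,\mathbf{a}_n}(M)$, only the reverse inclusion remains. Given $(m_I)_I \in D_{\mathbf{a}_1,\ldots,\mathbf{a}_n}(M)$ indexed by $n$-tuples $I$, the cocycle condition imposes $m_I = m_{I'}$ in the corresponding one-step localization whenever $I, I'$ differ in exactly one coordinate. For general $I, J$, I chain through $I = I_0, I_1, \ldots, I_n = J$ by changing one coordinate at a time; a direct divisibility check shows $a_{I_\ell} a_{I_{\ell+1}}$ divides $a_I a_J$ for each $\ell$, so each intermediate equality descends to $M_{a_I a_J}$, and transitivity yields $m_I = m_J$ there, i.e., $(m_I) \in D_{\mathfrak{a}_1\cdots\mathfrak{a}_n}(M)$.

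For stage (II), the two instances of (\ref{eq:fundamentalexactsequence}) for the multi-\v{C}ech and for the product share their first three terms $H^{n-1}_{\mathbf{a}_1,\ldots,\mathbf{a}_n}(M) = H^0_{\mathfrak{a}_1\cdots\mathfrak{a}_n}(M)$, $M$, and the common $D$, together with the middle map (the shared augmentation). Matching cokernels yields $H^n_{\mathbf{a}_1,\ldots,\mathbf{a}_n}(M) \simeq H^1_{\mathfrak{a}_1\cdots\mathfrak{a}_n}(M)$.

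For stage (III), I dimension-shift. After replacing $M$ by $\bar M := M/H^0_{\mathfrak{a}_1\cdots\mathfrak{a}_n}(M)$ (which leaves both cohomologies unchanged for $i \geq 1$, since by Lemma \ref{Cechvan} the torsion submodule has vanishing $\Cc^j$ for $j \geq n$ resp.\ $j \geq 1$), the diagonal map $\bar M \hookrightarrow E := \bigoplus_I \bar M_{a_I}$ is injective; let $M'$ denote its cokernel. For each $I$ and each $k$, the generator $a_{k,i_k}$ dividing $a_I$ is invertible in $\bar M_{a_I}$, so $\Cc^\bullet_{\mathbf{a}_k}(\bar M_{a_I})$ is contractible; iterating this quasi-isomorphism through the tensor product of (flat) truncated \v{C}ech complexes shows that $\check\Cc^\bullet_{\mathbf{a}_1,\ldots,\mathbf{a}_n}(\bar M_{a_I})$ is quasi-isomorphic to $\bar M_{a_I}$ in degree $n$, and $\check\Cc^\bullet_{\mathbf{a}_1\cdots\mathbf{a}_n}(\bar M_{a_I})$ to $\bar M_{a_I}$ in degree $1$. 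Combined with the augmentations (which under these identifications become isomorphisms), this yields $H^j_{\mathbf{a}_1,\ldots,\mathbf{a}_n}(E) = 0$ for $j \geq n - 1$ and $H^j_{\mathfrak{a}_1\cdots\mathfrak{a}_n}(E) = 0$ for $j \geq 0$. The long exact sequences then deliver compatible dimension shifts
\[
H^{i+n-1}_{\mathbf{a}_1,\ldots,\mathbf{a}_n}(M) \simeq H^{(i-1)+n-1}_{\mathbf{a}_1,\ldots,\mathbf{a}_n}(M'), \qquad H^i_{\mathfrak{a}_1\cdots\mathfrak{a}_n}(M) \simeq H^{i-1}_{\mathfrak{a}_1\cdots\mathfrak{a}_n}(M'),
\]
for $i \geq 2$, and induction on $i$ with base case $i = 1$ from stage (II) closes the proof. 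The main obstacle is the vanishing $H^*(E) = 0$ for both functors in the correct range, which requires iterating K\"unneth-style quasi-isomorphisms through tensor products of flat truncated \v{C}ech complexes and then carefully comparing the augmentations to control the otherwise delicate cohomology groups $H^{n-1}$ and $H^n$ of the augmented multi-\v{C}ech complex on each localization $\bar M_{a_I}$.
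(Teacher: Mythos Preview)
Your proof is correct and takes a genuinely different route from the paper's. A small quibble: the literal claim that $a_{I_\ell}a_{I_{\ell+1}}$ divides $a_Ia_J$ is false because of repeated factors, but what you actually need (and what holds) is that the squarefree part $c_\ell=a_{1,j_1}\cdots a_{\ell,j_\ell}\,a_{\ell+1,i_{\ell+1}}a_{\ell+1,j_{\ell+1}}\,a_{\ell+2,i_{\ell+2}}\cdots a_{n,i_n}$ divides $a_Ia_J$, so that $M_{a_Ia_J}$ is a further localization of $M_{c_\ell}$; this suffices for the chaining.

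The paper proceeds quite differently. After reducing to $H^0_{\mathfrak a_1\cdots\mathfrak a_n}(M)=0$, it considers the double complex $\check\Cc^\bullet_{\mathbf a_1\cdots\mathbf a_n}(\check\Cc^\bullet_{\mathbf a_1,\ldots,\mathbf a_n}(M))$; by Lemma~\ref{suppDab} both associated spectral sequences collapse at page~2, and comparing their abutments gives simultaneously $D_{\mathbf a_1,\ldots,\mathbf a_n}(D_{\mathfrak a_1\cdots\mathfrak a_n}(M))=D_{\mathfrak a_1\cdots\mathfrak a_n}(D_{\mathbf a_1,\ldots,\mathbf a_n}(M))$ and the higher isomorphisms. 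Combined with the observation that replacing $M$ by either $D$ leaves the relevant truncated complexes unchanged, this bootstraps to $D_{\mathbf a_1,\ldots,\mathbf a_n}(M)=D_{\mathfrak a_1\cdots\mathfrak a_n}(M)$ and the full result in one stroke. Your approach is more elementary: the direct cocycle-chasing for the $D$-isomorphism avoids spectral sequences entirely, and the dimension shift through the acyclic object $E=\bigoplus_I\bar M_{a_I}$ is a clean classical device. Notably, you never invoke Lemma~\ref{suppDab}, so your argument is more self-contained. The paper's method is more uniform (one double complex does everything) and makes the interplay between the two \v Cech constructions structurally transparent, which is the perspective the rest of the paper builds on.
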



\begin{proof}
We may assume that $H^0_{\mathfrak{a_1}\cdots \mathfrak{a_n}}(M)=0$ according to Lemma \ref{Cechvan}.

The exact sequences 
$$
0\ra M\ra D_{\mathfrak{a}_1\cdots \mathfrak{a}_n}(M)\ra H^1_{\mathfrak{a}_1\cdots\mathfrak{a}_n}(M)\ra 0
$$
and
$$
0\ra M\ra D_{\mathbf{a}_1,\ldots ,\mathbf{a}_n}(M)\ra H^n_{\mathbf{a}_1,\ldots ,\mathbf{a}_n}(M)\ra 0
$$
provide two isomorphisms of complexes:
$$
\check\Cc^{\bullet}_{\mathbf{a}_1,\ldots ,\mathbf{a}_n}(M)\simeq \check\Cc^{\bullet}_{\mathbf{a}_1,\ldots ,\mathbf{a}_n}(D_{\mathfrak{a}_1\cdots \mathfrak{a}_n}(M))
\quad\mbox{and}\quad
\check\Cc^{\bullet}_{\mathbf{a}_1\cdots \mathbf{a}_n}(M)\simeq \check\Cc^{\bullet}_{\mathbf{a}_1\cdots \mathbf{a}_n}(D_{\mathbf{a}_1,\ldots ,\mathbf{a}_n}(M))
$$
according to  Lemma \ref{Cechvan}, since $H^1_{\mathfrak{a}_1\cdots \mathfrak{a}_n}(M)=H^0_{\mathfrak{a}_1\cdots \mathfrak{a}_n}(H^1_{\mathfrak{a}_1\cdots \mathfrak{a}_n}(M))$, and $H^n_{\mathbf{a}_1,\ldots ,\mathbf{a}_n}(M)=H^0_{\mathfrak{a}_1\cdots \mathfrak{a}_n}(H^n_{\mathbf{a}_1,\ldots ,\mathbf{a}_n}(M))$ by Lemma  \ref{suppDab}. In particular,
\begin{itemize}
\item[(a)] $D_{\mathbf{a}_1,\ldots ,\mathbf{a}_n}(M)=D_{\mathbf{a}_1,\ldots ,\mathbf{a}_n}(D_{\mathfrak{a}_1\cdots \mathfrak{a}_n}(M))$ and $D_{\mathfrak{a}_1\cdots \mathfrak{a}_n}(M)=D_{\mathfrak{a}_1\cdots \mathfrak{a}_n}(D_{\mathbf{a}_1,\ldots ,\mathbf{a}_n}(M))$;

\item[(b)] $H^{i+n-1}_{\mathbf{a}_1,\ldots ,\mathbf{a}_n}(M)=H^{i+n-1}_{\mathbf{a}_1,\ldots ,\mathbf{a}_n}(D_{\mathfrak{a}_1\cdots \mathfrak{a}_n}(M))$ and $H^i_{\mathfrak{a}_1\cdots \mathfrak{a}_n}(M)=H^{i}_{\mathfrak{a}_1\cdots\mathfrak{a}_n}(D_{\mathbf{a}_1,\ldots ,\mathbf{a}_n}(M))$, for any $i\geq 2$.
\end{itemize}

Lemma \ref{suppDab} assures that the two spectral sequences associated to $\check\Cc^\bullet_{\mathbf{a}_1\cdots \mathbf{a}_n}(\check\Cc^\bullet_{\mathbf{a}_1,\ldots ,\mathbf{a}_n}(M))$ collapse at step $2$; it thus provides isomorphisms
$$
H^i_{\mathfrak{a}_1\cdots \mathfrak{a}_n}(D_{\mathbf{a}_1,\ldots ,\mathbf{a}_n}(M))\simeq H^{i+n-1}_{\mathbf{a}_1,\ldots ,\mathbf{a}_n}(D_{\mathfrak{a}_1\cdots \mathfrak{a}_n}(M)),\quad \forall i\geq 2
$$
and shows that $D_{\mathbf{a}_1,\ldots ,\mathbf{a}_n}(D_{\mathfrak{a}_1\cdots \mathfrak{a}_n}(M))=D_{\mathfrak{a}_1\cdots \mathfrak{a}_n}(D_{\mathbf{a}_1,\ldots ,\mathbf{a}_n}(M))$. Thus, by (a), $D_{\mathbf{a}_1,\ldots ,\mathbf{a}_n}(M)=D_{\mathfrak{a}_1\cdots \mathfrak{a}_n}(M)$ and the conclusion follows from (b).
\end{proof}

\begin{cor}\label{quasiisomorphisms}
For any $R$-module $M$, $1\leq i_1<\cdots <i_p\leq n$ with $p\geq 1$ and any $i$,
\begin{itemize}
\item [(a)] $H^{i+p-1}_{\mathbf{a}_{i_1},\ldots ,\mathbf{a}_{i_p}}(M)\simeq H^i_{\mathfrak{a}_{i_1}\cdots \mathfrak{a}_{i_1}}(M)$,
\item [(b)] $H^{i+p-1}(\check\Cc^{\bullet}_{\mathbf{a}_{i_1},\ldots ,\mathbf{a}_{i_p}}(M))\simeq H^i(\check\Cc^{\bullet}_{\mathbf{a}_{i_1}\cdots \mathbf{a}_{i_p}}(M))$.
\end{itemize}
\end{cor}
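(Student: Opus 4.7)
The plan is to obtain Corollary \ref{quasiisomorphisms} as a direct application of Theorem \ref{quasiisomorphism}, using the subfamily of $p$ sequences $\mathbf{a}_{i_1},\ldots,\mathbf{a}_{i_p}$ in place of the full collection $\mathbf{a}_1,\ldots,\mathbf{a}_n$. Since the theorem is stated for an arbitrary number of sequences, this substitution is immediate and legitimate.

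For part (a), I would simply invoke Theorem \ref{quasiisomorphism} with $n$ replaced by $p$ and with the chosen $p$ sequences. This produces the stated isomorphism $H^{i+p-1}_{\mathbf{a}_{i_1},\ldots,\mathbf{a}_{i_p}}(M)\simeq H^i_{\mathfrak{a}_{i_1}\cdots\mathfrak{a}_{i_p}}(M)$ without any further argument.

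For part (b), I would translate between the cohomology of the augmented complex $\Cc^\bullet$ governed by (a) and the cohomology of the truncated complex $\check\Cc^\bullet$. The short exact sequence of complexes $0\to\check\Cc^\bullet_{\mathbf{a}_{i_1},\ldots,\mathbf{a}_{i_p}}(M)\to\Cc^\bullet_{\mathbf{a}_{i_1},\ldots,\mathbf{a}_{i_p}}(M)\to M[p-1]\to 0$ combined with (\ref{eq:fundamentalexactsequence}) (specialized to $p$ sequences) shows that $H^{p-1}(\check\Cc^\bullet_{\mathbf{a}_{i_1},\ldots,\mathbf{a}_{i_p}}(M))=0$, $H^p(\check\Cc^\bullet_{\mathbf{a}_{i_1},\ldots,\mathbf{a}_{i_p}}(M))=D_{\mathbf{a}_{i_1},\ldots,\mathbf{a}_{i_p}}(M)$, and $H^j(\check\Cc^\bullet_{\mathbf{a}_{i_1},\ldots,\mathbf{a}_{i_p}}(M))=H^j_{\mathbf{a}_{i_1},\ldots,\mathbf{a}_{i_p}}(M)$ for $j\geq p+1$. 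The parallel discussion for the product complex (the case $p=1$ of the very same mechanism) gives $H^0(\check\Cc^\bullet_{\mathbf{a}_{i_1}\cdots\mathbf{a}_{i_p}}(M))=0$, $H^1(\check\Cc^\bullet_{\mathbf{a}_{i_1}\cdots\mathbf{a}_{i_p}}(M))=D_{\mathfrak{a}_{i_1}\cdots\mathfrak{a}_{i_p}}(M)$, and $H^j(\check\Cc^\bullet_{\mathbf{a}_{i_1}\cdots\mathbf{a}_{i_p}}(M))=H^j_{\mathfrak{a}_{i_1}\cdots\mathfrak{a}_{i_p}}(M)$ for $j\geq 2$. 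I would then conclude by case analysis on $i$: for $i\geq 2$ both sides are cohomologies of augmented complexes and are identified via (a); for $i=1$ both sides are the corresponding $D$-modules, which are isomorphic by the second assertion of Theorem \ref{quasiisomorphism}; for $i\leq 0$ both sides vanish by the degree range of the truncated complexes.

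The argument presents no genuine obstacle, since all nontrivial content has already been packaged into Theorem \ref{quasiisomorphism}. The only step requiring mild care is the low-degree bookkeeping that distinguishes $\Cc^\bullet$ from its truncation $\check\Cc^\bullet$, and this is completely governed by the fundamental exact sequence (\ref{eq:fundamentalexactsequence}).
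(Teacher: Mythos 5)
Your proposal is correct, and it takes the same route the paper intends: the corollary is stated without proof as an immediate consequence of Theorem \ref{quasiisomorphism}, and your argument simply makes that explicit. Part (a) is exactly the theorem applied to the subfamily $\mathbf{a}_{i_1},\ldots,\mathbf{a}_{i_p}$, and your part (b) is the routine translation from $\Cc^\bullet$ to $\check\Cc^\bullet$ via the fundamental exact sequence together with the identification $H^p(\check\Cc^\bullet_{\mathbf{a}_{i_1},\ldots,\mathbf{a}_{i_p}}(M))=D_{\mathbf{a}_{i_1},\ldots,\mathbf{a}_{i_p}}(M)$ and the second assertion $D_{\mathbf{a}_{i_1},\ldots,\mathbf{a}_{i_p}}(M)\simeq D_{\mathfrak{a}_{i_1}\cdots\mathfrak{a}_{i_p}}(M)$ of the theorem for the $i=1$ case; nothing is missing. (Incidentally, you silently corrected the typo in the printed statement of part (a), where $\mathfrak{a}_{i_1}\cdots\mathfrak{a}_{i_1}$ should read $\mathfrak{a}_{i_1}\cdots\mathfrak{a}_{i_p}$.)
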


\section{The four Mayer-Vietoris spectral sequences}\label{fourmvss}

In this section, we make use of Theorem \ref{fourspec} and Theorem \ref{quasiisomorphism} to construct four spectral sequences,
two goes from  cohomology supported in sums of the given ideals to the cohomology supported in the product of all of them, the others from cohomology supported in products of the given ideals to the cohomology supported in the sum of all of them; in each case, the versions include non augmented and augmented \v{C}ech complexes.

To follow usual notation, we set $\check H^i_\mathfrak{a}(M):=H^{i+1}(\check\Cc^\bullet_{\mathbf{a}}(M))$ if $\mathfrak{a}$ is generated by the elements in $\mathbf{a}$.

\begin{thm}\label{mvss}
Let $M$ be an $R$-module and $\mathfrak{a}_i$ be finitely generated ideals, then there exist four convergent spectral sequences
\begin{itemize}

\item [(1a)] $E_1^{n-p,q}=\bigoplus_{i_1<\ldots<i_p}^{1\leq p\leq n}H^q_{\mathfrak{a}_{i_1}+\cdots+\mathfrak{a}_{i_p}}(M)\Rightarrow_p H^{q-(p-1)}_{\mathfrak{a}_1\cdots\mathfrak{a}_n}(M)$,\smallskip

\item [(1b)] $E_1^{n-p,q}=\bigoplus_{i_1<\cdots<i_p}^{1\leq p\leq n}\check H^q_{\mathfrak{a}_{i_1}+\cdots+\mathfrak{a}_{i_p}}(M)\Rightarrow_p \check H^{q-(p-1)}_{\mathfrak{a}_1\cdots\mathfrak{a}_n}(M)$,\smallskip

\item [(2a)] $E_1^{p,q}=\bigoplus_{i_1<\ldots<i_p}^{1\leq p\leq n}H^{q}_{\mathfrak{a}_{i_1}\cdots\mathfrak{a}_{i_p}}(M)\Rightarrow_p H^{q+(p-1)}_{\mathfrak{a}_1+\cdots +\mathfrak{a}_n}(M)$,\smallskip

\item [(2b)] $E_1^{p,q}=\bigoplus_{i_1<\ldots<i_p}^{1\leq p\leq n}\check H^q_{\mathfrak{a}_{i_1}\cdots\mathfrak{a}_{i_p}}(M)\Rightarrow_p \check H^{q+(p-1)}_{\mathfrak{a}_1+\cdots +\mathfrak{a}_n}(M)$.
\end{itemize}
\end{thm}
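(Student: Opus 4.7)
The plan is to apply Theorem \ref{fourspec} to the $n$-multicomplex $C^{\underline{\bullet}}$ defined by
\[
C^\mathbf{q} := \Cc^{q_1}_{\mathbf{a}_1}(R) \otimes_R \cdots \otimes_R \Cc^{q_n}_{\mathbf{a}_n}(R) \otimes_R M,
\]
and then to translate each of the four spectral sequences obtained into cohomology of sums and products of the $\mathfrak{a}_i$ via Theorem \ref{quasiisomorphism} and Corollary \ref{quasiisomorphisms}. The Künneth isomorphism for \v{C}ech complexes of disjoint variables identifies $C^\bullet_{\Face_{j_1,\ldots,j_p}}$ with $\Cc^\bullet_{\mathbf{a}_{j_1}\sqcup\cdots\sqcup\mathbf{a}_{j_p}}(M)$, whose cohomology is $H^\bullet_{\sum_k \mathfrak{a}_{j_k}}(M)$; the interior subcomplexes $C^\bullet_{\intF_{i_1,\ldots,i_p}}$ and $^+C^\bullet_{\intF_{i_1,\ldots,i_p}}$ coincide with $\check\Cc^\bullet_{\mathbf{a}_{i_1},\ldots,\mathbf{a}_{i_p}}(M)$ and $\Cc^\bullet_{\mathbf{a}_{i_1},\ldots,\mathbf{a}_{i_p}}(M)$ of Section \ref{cohomologysection}, so Corollary \ref{quasiisomorphisms}(a,b) converts their cohomology into $H^\bullet_{\mathfrak{a}_{i_1}\cdots\mathfrak{a}_{i_p}}(M)$ and $\check H^\bullet_{\mathfrak{a}_{i_1}\cdots\mathfrak{a}_{i_p}}(M)$, shifted by $p-1$. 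Finally, the total complex $C^\bullet$ and its punctured version $C^\bullet_\Pface$ coincide with the augmented and truncated \v{C}ech complexes for the full concatenated sequence $\mathbf{a}_1\sqcup\cdots\sqcup\mathbf{a}_n$.

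With these identifications in hand, spectral sequence (2) of Theorem \ref{fourspec}, reindexed by $p'=n-p$, directly yields (1a): the abutment $H^{p'+q}({^+}C^\bullet_\intF)$ equals $H^{q-(p-1)}_{\mathfrak{a}_1\cdots\mathfrak{a}_n}(M)$ by Theorem \ref{quasiisomorphism}. Likewise, spectral sequence (3) directly yields (2b): the first page becomes $\bigoplus \check H^q_{\mathfrak{a}_{i_1}\cdots\mathfrak{a}_{i_p}}(M)$ by Corollary \ref{quasiisomorphisms}(b), and the abutment is $H^{p+q}(C^\bullet_\Pface) = \check H^{q+(p-1)}_{\sum_i\mathfrak{a}_i}(M)$. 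For the remaining two cases, a uniform $\pm 1$ shift of the cohomological degree is required. Spectral sequence (4) applied to $C^{\underline{\bullet}}$ gives, via Corollary \ref{quasiisomorphisms}(a), first page $\bigoplus H^{q+1}_{\mathfrak{a}_{i_1}\cdots\mathfrak{a}_{i_p}}(M)$ and abutment $H^{p+q}_{\sum_i\mathfrak{a}_i}(M)$; the reindexing $q\mapsto q-1$ produces (2a) (correctly even at the $q=0$ boundary, since the augmented multi-complex is nonzero from degree $p-1$ onwards). For (1b), I apply spectral sequence (1) to the modified multicomplex $\tilde C^{\underline{\bullet}}$ obtained from $C^{\underline{\bullet}}$ by replacing $C^{\underline 0}=M$ by $0$; then $\tilde C^\bullet_{\Face_{j_1,\ldots,j_p}}$ becomes the truncated \v{C}ech complex $\check\Cc^\bullet_{\mathbf{a}_{j_1}\sqcup\cdots\sqcup\mathbf{a}_{j_p}}(M)$, and the reindexing $q\mapsto q+1$ yields (1b) with first page $\bigoplus \check H^q_{\sum_k\mathfrak{a}_{j_k}}(M)$ and abutment $\check H^{q-(p-1)}_{\mathfrak{a}_1\cdots\mathfrak{a}_n}(M)$; the choice of $\tilde C^{\underline{\bullet}}$ over $C^{\underline{\bullet}}$ is precisely what secures agreement at the $q=0$ boundary, where $\check H^0_\mathfrak{a}$ and $H^1_\mathfrak{a}$ differ.

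The main obstacle is the careful bookkeeping of cohomological indices and the systematic $\pm 1$ shifts, which reflect both the gap between the augmented and truncated \v{C}ech conventions (encoded in $\check H^i_\mathfrak{a}(M) = H^{i+1}(\check\Cc^\bullet_\mathbf{a}(M))$) and the shift by $n-1$ between the tensor-of-augmented multi-complex and the standard \v{C}ech complex for the product ideal, built into Theorem \ref{quasiisomorphism}.
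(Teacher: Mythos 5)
Your proof is correct and follows essentially the same route as the paper: apply the four spectral sequences of Theorem \ref{fourspec} to the \v{C}ech $n$-multicomplex $C^{\underline\bullet}$ (for the augmented cases (1a), (2a)) or to its puncture $C^{\underline\bullet}_\Pface$ (for (1b), (2b)), then translate the $E_1$-terms and abutments via Theorem \ref{quasiisomorphism} and Corollary \ref{quasiisomorphisms}, with the same $p\mapsto n-p$ and $q\mapsto q\pm1$ reindexings. Your remark that for (2b) one may apply spectral sequence (3) to $C^{\underline\bullet}$ itself (rather than to $C^{\underline\bullet}_\Pface$ as the paper does) is immaterial, since the interior subcomplexes and the abutment complex $C^\bullet_\Pface$ are unchanged by removing the degree-$\underline 0$ piece.
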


\begin{proof}
We will apply Theorem \ref{fourspec}  to the $n$-multicomplex $C^\bb=\Cc^\bullet_{\mathbf{a}_1}(\Cc^\bullet_{\mathbf{a}_2}(\cdots (\Cc^\bullet_{\mathbf{a}_n}(M))))$ for (1a) and (2a) and to the punctured complex  $C^\bb_P$ for (1b) and (2b). Recall that the $i$-th homology of the totalization of
$
C^{\bb}_{\Face_{i_1,\ldots ,i_p}}=\Cc^\bullet_{\mathbf{a}_{i_1}}(\Cc^\bullet_{\mathbf{a}_{i_2}}(\cdots (\Cc^\bullet_{\mathbf{a}_{i_p}}(M))))
$
is $H^i_{\mathfrak{a}_{i_1}+\cdots+\mathfrak{a}_{i_p}}(M)$, while the one of $C^{\bb}_{\Pface_{i_1,\ldots ,i_p}}=(C^{\bb}_\Pface)_{\Face_{i_1,\ldots ,i_p}}
=(C^{\bb}_{\Face_{i_1,\ldots ,i_p}})_\Pface$ is $\check H^{i-1}_{\mathfrak{a}_{i_1}+\cdots+\mathfrak{a}_{i_p}}(M)$.

By Corollary \ref{quasiisomorphisms} (a), $H^{i}({^+{C}}^{\bullet}_{\intF_{i_1,\ldots ,i_p}})=H^{i-n+1}_{\mathfrak{a}_{i_1}\cdots \mathfrak{a}_{i_p}}(M)$, while $H^{i}(C^{\bullet}_{\intF_{i_1,\ldots ,i_p} })=\check H^{i-n}_{\mathfrak{a}_{i_1}\cdots \mathfrak{a}_{i_p}}(M)$ by Corollary \ref{quasiisomorphisms} (b). 

Therefore (1a), (1b), (2a) and (2b) follow respectively from items (2), (1), (4) and (3) in Theorem \ref{fourspec}.
\end{proof}

Items (1b) and (2b) concerns local cohomology in the version that equals sheaf cohomology: $\check H^i_{\mathfrak a}(M)=H^i(U,\tilde M)$ with $U:=\Spec (R)\setminus V({\mathfrak a})$ by \cite[1.2.3 \& 1.4.3]{EGAIII}. In the other two versions (1a) and (2a),  \v Cech cohomlogy is related to sheaf cohomology by the exact sequence 
$$
\xymatrix{
0\ar[r]&H^0_{\mathfrak a}(M)\ar[r]&M\ar[r]&H^0(U,\tilde M )\ar[r]&H^1_{\mathfrak a}(M)\ar[r]&0,}
$$
as proved in the same reference, and $H^i_{\mathfrak a}(M)=\check H^{i-1}_{\mathfrak a}(M)=H^{i-1}(U,\tilde M)$ for $i\geq 2$. Also recall that $H^i_{\mathfrak a}(-)$ is the $i$-th right derived functor of $H^0_{\mathfrak a}(-)$ whenever $R$ is Noetherian.\\

To give a hint on the behavior of the spectral sequences above, we make a few comments on the spectral sequence (1a); is construction is similar to (1b). For concrete examples and applications, that partially inspired this work, see \cite{AGZ,Lyu,Hol}. In general, its differential $d_r$ has degree $(r,1-r)$, i.e. $d_r^{n-p,q}:E_r^{n-p,q}\ra E_r^{n-p+r,q-r+1}$ for all $p,q$.

For three ideals it has only three columns, so that it degenerates at the third page. A sketch of such a spectral sequence, where the blue arrows represent first-page differentials and the  red ones represent the directions of the second-page differentials, is the following:

$$
\xymatrix@=2em{&\vdots&\vdots&\vdots&
\\
0 \ar@[blue][r]& H^{j+2}_{\mathfrak{a}_1+\mathfrak{a}_2+\mathfrak{a}_3}(M)\ar@[blue]^{\varphi}[r]\ar@[red][rrd]\ar@{--}[rrdd] &  \oplus_{i< j}H^{j+2}_{\mathfrak{a}_i+\mathfrak{a}_j}(M)\ar@[blue]^{\psi}[r] & \oplus_i H^{j+2}_{\mathfrak{a}_i}(M) \ar@[blue][r]& 0
\\
0\ar@[blue][r] & H^{j+1}_{\mathfrak{a}_1+\mathfrak{a}_2+\mathfrak{a}_3}(M)\ar@[blue]^{\varphi '}[r]\ar@[red][rrd] &  \oplus_{i< j}H^{j+1}_{\mathfrak{a}_i+\mathfrak{a}_j}(M)\ar@[blue]^{\psi '}[r] & \oplus_i H^{j+1}_{\mathfrak{a}_i}(M)\ar@[blue][r]& 0
\\
0 \ar@[blue][r]& H^{j}_{\mathfrak{a}_1+\mathfrak{a}_2+\mathfrak{a}_3}(M)\ar@[blue]^{\varphi ''}[r] &  \oplus_{i< j}H^{j}_{\mathfrak{a}_i+\mathfrak{a}_j}(M)\ar@[blue]^{\psi ''}[r] & \oplus_i H^{j}_{\mathfrak{a}_i}(M) \ar@[blue][r]& 0
\\
&\vdots&\vdots&\vdots&
}
$$

The names of the $E_1^{\bullet ,\bullet}$ differentials are simplified to improve clarity (e.g. $\varphi =d_1^{0,j+2}$); these are given by the natural maps coming from the inclusions of ideals (e.g. the inclusion $\mathfrak{a}_1\subseteq \mathfrak{a}_1+\mathfrak{a}_2$ provides a natural map $H^i_{\mathfrak{a}_1+\mathfrak{a}_2}(-)\ra H^i_{\mathfrak{a}_1}(-)$ for every $i$), each of these pieces taken with an appropriate sign.

The dotted line is the diagonal where the filtration $0\subseteq F^2\subseteq F^1\subseteq F^0 =H^j_{\mathfrak{a}_1\mathfrak{a}_2\mathfrak{a}_3}(M)$ is given by the infinity terms (here isomorphic to the terms at step 3). This filtration satisfies:
$$F^2\simeq\coker(\ker (\varphi '){\color{red}\ra}\coker (\psi '')), \quad F^1/F^2\simeq \frac{\ker(\psi ')}{\mbox{im}(\varphi ')},\quad F^0/F^1\simeq\ker(\ker (\varphi ){\color{red}\ra}\coker (\psi ')).$$


\begin{rem}
\begin{itemize}


\item [(a)] In Theorem \ref{mvss} (1a) we retrieve the Mayer-Vietoris spectral sequence in \cite{Lyu}.

\item [(b)] A spectral sequence as (2b) in Theorem \ref{mvss} could be obtained as a \v{C}ech spectral sequence, by  \cite[Th\'eor\`eme 5.4.1]{God}. Indeed, the first terms in the quoted spectral sequence are described in \cite[Chap. II, \S 5.3]{God} as sheaf cohomologies on open sets that are intersections of the open complements $U_i$ of $V(\mathfrak{a_i})$ in $\Spec (R)$ : the ones that appear in the \v{C}ech covering of $\cup_i U_i =\Spec (R)\setminus V({\mathfrak{a}}_1 +\cdots +{\mathfrak{a}}_n)$. These sheaf cohomology modules are in turn isomorphic to local cohomologies with support in products of the ideals, by \cite[1.2.3 \& 1.4.3]{EGAIII}, since $U_{i_1}\cap\cdots\cap U_{i_p}=\Spec (R)\setminus V({\mathfrak{a}}_{i_1}\cdots {\mathfrak{a}}_{i_p})$. 

\item [(c)] Such spectral sequences degenerate into the celebrated Mayer-Vietoris long exact sequence when one consider only two ideals. It is a different way from \cite[Theorem 9.4.3]{SS} and \cite{T} of obtaining this exact sequence.

\end{itemize}
\end{rem}

\noindent{\bf Acknowledgements.} First and third named authors thanks France-Brazil network RFBM for supporting this work. The second named author was supported by a CAPES Doctoral Scholarship.

\end{document}